\documentclass[12pt]{amsart}
\usepackage{url, 
	amssymb,setspace, mathrsfs,fontenc, comment}
\usepackage[alphabetic]{amsrefs}
\usepackage{fullpage} 
\usepackage{color}
\usepackage{tikz-cd}
\usepackage[all]{xy}
\usepackage{amsmath}

\usepackage{url, 
	amssymb,setspace, mathrsfs,fontenc}
\usepackage{amsrefs}
\usepackage{graphicx}
\usepackage[mathcal]{euscript}
\usepackage{verbatim}
\usepackage{hyperref}
\hypersetup{backref=true}
\usepackage{mathtools}
\usepackage{tikz}
\usetikzlibrary{chains}

\tikzset{node distance=2em, ch/.style={circle,draw,on chain,inner sep=2pt},chj/.style={ch,join},every path/.style={shorten >=4pt,shorten <=4pt},line width=1pt,baseline=-1ex}

\newtheorem{thm}{Theorem}
\newtheorem{lem}[thm]{Lemma}

\newtheorem{conj}[thm]{Conjecture}

\newtheorem{defe}[thm]{Definition}

\theoremstyle{remark}
\newtheorem{rem}[thm]{Remark}

\DefineSimpleKey{bib}{myurl}
\newcommand\myurl[1]{\url{#1}}
\BibSpec{webpage}{
	+{}{\PrintAuthors} {author}
	+{,}{ \textit} {title}
	+{}{ \parenthesize} {date}
	+{,}{ \myurl} {myurl}
}
\usepackage{arydshln}

\newcommand{\nc}{\newcommand}

\nc{\ssec}{\subsection}

\nc{\on}{\operatorname}

\nc{\sE}{\mathscr{E}}
\nc{\sF}{\mathscr{F}}
\nc{\sL}{\mathscr{L}}
\nc{\sD}{\mathscr{D}}
\nc{\sA}{\mathscr{A}}

\nc{\cC}{\mathcal{C}}
\nc{\cG}{\mathcal{G}}
\nc{\cV}{\mathcal{V}}
\nc{\CB}{\mathcal{B}}
\nc {\K}{\mathcal{K}}

\nc{\cE} {\mathcal{E}}
\nc{\Kl}{\mathrm{Kl}}
\nc{\cO}{\mathcal{O}}
\nc{\cF}{\mathcal{F}}
\nc{\cZ}{\mathcal{Z}}
\nc{\bcZ}{\overline{\mathcal{Z}}}
\nc{\bcB}{\overline{\mathcal{B}}}
\nc{\cD}{\mathcal{D}}
\nc{\cDt}{\mathcal{D}^\times}
\nc{\cH}{\mathcal{H}}
\nc{\bZ}{\mathbb{Z}}
\nc{\bH}{\mathbb{H}}
\nc{\bQ}{\mathbb{Q}}
\nc{\bR}{\mathbb{R}}
\nc{\bC}{\mathbb{C}}
\nc{\bQl}{\overline{\mathbb{Q}}_\ell}
\nc{\bQlt}{\bQl^\times} 
\nc{\FG}{\mathrm{FG}}
\nc{\dR}{\mathrm{dR}}
\nc{\dv}{\dot{v}}
\nc{\du}{\dot{u}}
\nc{\bbR}{\mathbb{R}}

\nc{\uG}{\underline{G}}
\nc{\uc}{\underline{c}}
\nc{\uu}{\underline{u}}
\nc{\cU}{\mathcal{U}}
\nc{\rat}{\mathrm{rat}}
\nc{\Hyp}{\mathrm{Hyp}}
\nc{\Lie}{\mathrm{Lie}}
\nc{\ctheta}{\check{\theta}}
\nc{\nil}{\mathrm{nil}}
\nc{\bLX}{\overline{LX}}
\nc{\bOmega}{\overline{\Omega}}
\nc{\tOmega}{\widetilde{\Omega}}

\nc{\fF}{\mathfrak{F}}
\nc{\fB}{\mathfrak{B}}
\nc{\fZ}{\mathfrak{Z}}
\nc{\fx}{\mathfrak{x}}
\nc{\fy}{\mathfrak{y}}
\nc{\fb}{\mathfrak{b}}
\nc{\fk}{\mathfrak{k}}
\nc{\fI}{\mathfrak{i}}
\nc{\fj}{\mathfrak{j}}
\nc{\fg}{\mathfrak{g}}
\nc{\fu}{\mathfrak{u}}
\nc{\fl}{\mathfrak{l}}
\nc{\fn}{\mathfrak{n}}
\nc{\cP}{\mathcal{P}}
\nc{\cQ}{\mathcal{Q}}
\nc{\ft}{\mathfrak{t}}
\nc{\fz}{\mathfrak{z}}
\nc{\fc}{\mathfrak{c}}
\nc{\cfc}{\check{\mathfrak{c}}}
\nc{\fh}{\mathfrak{h}}
\nc{\fp}{\mathfrak{p}}
\nc{\cfp}{\mathring{\mathfrak{p}}}
\nc{\bone}{\mathbf{1}}
\nc{\tg}{\mathtt{g}}
\nc{\hfg}{\widehat{\fg}}
\nc{\ch}{\check{\fh}}
\nc{\hP}{\hat{P}}
\nc{\hg}{\widehat{\mathfrak{g}}}
\nc{\gO}{\mathfrak{g}[\![t]\!]}
\nc{\Ug}{\widehat{U}(\mathfrak{g})}
\nc{\dl}{/\!\!/}

\nc{\bGm}{\mathbb{G}_m}
\nc{\bGa}{\mathbb{G}_a}
\nc{\bL}{\mathbf{L}}
\nc{\bK}{\mathbf{K}}
\nc{\bJ}{\mathbf{J}}
\nc{\bI}{\mathbf{I}}
\nc{\bV}{\mathbb{V}}
\nc{\bM}{\mathbb{M}}
\nc{\bP}{\mathbb{P}}
\nc{\bA}{\mathbb{A}}
\nc{\bN}{\mathbb{N}}

\nc {\Q}{\mathrm{Q}}
\nc{\diag}{\mathrm{diag}}
\nc{\diff}{\mathrm{diff}}
\nc{\ev}{\mathrm{ev}}
\nc{\Res}{\mathrm{Res}}
\nc{\Fl}{\mathcal{F}\ell}
\nc{\Ad}{\mathrm{Ad}}
\nc{\ad}{\mathrm{ad}}
\nc{\pr}{\mathrm{pr}}
\nc{\Sl}{\mathfrak{sl}}
\nc{\gl}{\mathfrak{gl}}
\nc{\ra}{\rightarrow}
\nc{\tra}{\twoheadrightarrow}
\nc{\hra}{\hookrightarrow}
\nc{\quo}{\mathopen{ /\!/}}
\nc{\GL}{\mathrm{GL}}
\nc{\SL}{\mathrm{SL}}
\nc{\Sp}{\mathrm{Sp}}
\nc{\SO}{\mathrm{SO}}
\nc{\so}{\mathfrak{so}}
\nc{\PGL}{\mathrm{PGL}}
\nc{\Bun}{\mathrm{Bun}}
\nc{\supp}{\mathrm{supp}}
\nc{\bgamma}{\bar{\gamma}}
\nc{\ab}{\mathrm{ab}}
\nc{\td}{\mathrm{d}}
\nc{\Ht}{\mathrm{ht}}
\nc{\tX}{\tilde{X}}
\nc{\rW}{\mathrm{W}}

\nc         {\rar}[1]       {\stackrel{#1}{\longrightarrow}}

\nc{\fa}{\mathfrak{a}}
\nc{\Hit}{\mathrm{Hit}}

\nc{\RS}{\mathrm{RS}}
\nc{\Loc}{\mathrm{Loc}}
\nc{\tLoc}{\widetilde{\mathrm{Loc}}}
\nc{\reg}{\mathrm{reg}}
\nc{\im}{\mathrm{Im}}

\nc{\tp}{\mathfrak{p}}
\nc{\cA}{\mathcal{A}}
\nc{\cY}{\mathcal{Y}}

\nc{\opp}{\mathrm{opp}}
\nc{\Ind}{\mathrm{Ind}}
\nc{\sAn}{\mathrm{can}}
\nc{\Lg}{\check{\fg}}
\nc{\cDelta}{\check{\Delta}}
\nc{\cPhi}{\check{\Phi}}
\nc{\LV}{\check{V}}
\nc{\Lh}{\check{h}}
\nc{\LG}{\check{G}}
\nc{\cT}{\check{T}}
\nc{\ct}{\check{\ft}}
\nc{\cB}{\check{B}}
\nc{\cb}{\check{\fb}}
\nc{\cN}{\check{N}}
\nc{\sN}{\mathcal{N}}
\nc{\cn}{\check{\fn}}
\nc{\Spec}{\mathrm{Spec}}
\nc{\End}{\mathrm{End}}
\nc{\crho}{\check{\rho}}
\nc{\clambda}{\check{\lambda}}
\nc{\rX}{\mathring{X}}
\nc{\ru}{\mathring{u}}

\nc{\sW}{\mathscr{W}}
\nc{\sH}{\mathscr{H}}
\nc{\sV}{\mathscr{V}}
\nc{\geom}{\mathrm{geom}}
\nc{\Irr}{\mathrm{Irr}}
\nc{\fm}{\mathfrak{m}}
\nc{\aff}{\mathrm{aff}}
\nc{\Aut}{\mathrm{Aut}}
\nc{\cJ}{\mathcal{J}}
\nc{\fs}{\mathfrak{s}}
\nc{\Stab}{\mathrm{Stab}}
\nc{\st}{\mathrm{st}}
\nc{\tw}{{\widetilde{w}}}
\nc{\gen}{\mathrm{gen}}
\nc{\genn}{\mathrm{genn}}
\nc{\sss}{\mathrm{ss}}
\nc{\fsp}{\mathfrak{sp}}
\nc{\Hom}{\mathrm{Hom}}
\nc{\bm}{\mathbf{m}}
\nc{\HG}{\mathcal{HG}}
\nc{\Gal}{\mathrm{Gal}}
\nc{\Sym}{\mathrm{Sym}}
\nc{\rank}{\mathrm{rank}}

\nc{\calX}{\mathcal{X}}
\nc{\tP}{\mathtt{P}}
\nc{\tL}{\mathtt{L}}
\nc{\tU}{\mathtt{U}}

\nc{\tW}{\widetilde{W}}
\nc{\tdb}{\tilde{b}}
\nc{\tdd}{\tilde{d}}
\nc{\tv}{\tilde{v}}
\nc{\Hk}{\on{Hk}}
\nc{\cL}{\mathcal{L}}
\nc{\talpha}{\widetilde{\alpha}}
\nc{\tQ}{{\widetilde{Q}}}
\nc{\ochi}{\overline{\chi}}
\nc{\tdelta}{\widetilde{\Delta}}
\nc{\wt}{\mathrm{wt}}
\nc{\fQ}{\mathfrak{Q}}
\nc{\bbP}{\mathbb{P}}

\nc{\inv}{\mathrm{inv}}
\nc{\Rep}{\mathrm{Rep}}
\nc{\Conn}{\mathrm{Conn}}
\nc{\Hecke}{\mathrm{Hecke}}
\nc{\Gr}{\mathrm{Gr}}
\nc{\GR}{\mathrm{GR}}
\nc{\IC}{\mathrm{IC}}
\nc{\Std}{\mathrm{Std}} 
\nc{\Db}{\mathrm{D}^{\mathrm{b}}}
\nc{\tr}{\mathrm{tr}}
\nc{\gr}{\mathrm{gr}}
\nc{\tmin}{\mathrm{min}}
\nc{\Fun}{\mathrm{Fun}~}
\nc{\Temp}{\mathrm{Temp}}

\nc{\bbA}{\mathbb{A}}
\nc{\mO}{\mathrm{O}}

\newcommand{\quash}[1]{}

\AtEndDocument{\bigskip{\footnotesize

\textsc{Tsao-Hsien Chen, School of Mathematics, University of Minnesota, Twin cities, Minneapolis, MN 55455 } \par
\textit{E-mail address}: \texttt{chenth@umn.edu} \par
		
\textsc{Lingfei Yi, Shanghai Center for Mathematical Sciences, Fudan University, Shanghai 200438, China} \par
\textit{E-mail address}: \texttt{yilingfei@fudan.edu.cn} \par
}}

\begin{document} 
\renewcommand{\thepart}{\Roman{part}}

\renewcommand{\partname}{\hspace*{20mm} Part}

\title{Loop spaces, Twistor $\mathbb P^1$ and tempiric parameters} 
\date{\today}

 \author{Tsao-Hsien Chen and Lingfei Yi}

\dedicatory{}

\maketitle
\begin{abstract}

We establish bijections among spherical orbits on loop symmetric spaces, principal bundles on the twistor $\mathbb P^1$, and tempiric Langlands parameters. Using Vogan's theory of minimal $K$-types, we further establish bijections among irreducible equivariant local systems on loop symmetric spaces, irreducible local systems on principal bundles on the twistor $\mathbb P^1$, and irreducible representations of all strong inner forms of the dual symmetric subgroup. The latter bijections may be viewed as a combinatorial shadow of geometric Langlands duality for real reductive groups.
    
\end{abstract}
\setcounter{tocdepth}{1}
\tableofcontents

\section{Introduction}
Let $G$ be a complex connected reductive group, and let $\check G$ denote its complex Langlands dual group. 
Set $F=\bC((t))$ and $\cO=\bC[[t]]$ the formal Laurent and Taylor series rings.
A starting point for geometric Langlands duality for complex reductive groups is the collection of bijections
\begin{equation}\label{complex groups}
    \xymatrix{G(\cO)\backslash G(F)/G(\cO)\ar[rr]\ar[dr]&&\on{Irr}(\check G)\ar[dl]\ar[ll]\\
&\Bun_G(\bP^1)(\bC)\ar[ul]\ar[ur]&}
\end{equation}
among the set $G(\cO)\backslash G(F)/G(\cO)$ of $G(\cO)\times G(\cO)$-spherical orbits on $G(F)$, the set $\Bun_G(\bP^1)(\bC)$  of isomorphism classes of $G$-bundles on the complex projective line $\bP^1$, and the set $\on{Irr}(\check G)$ of irreducible finite dimensional complex representations of $\check G$. The horizontal bijection follows from the Cartan decomposition for loop groups together with the highest-weight classification of irreducible representations, while the bijection between the left and bottom terms follows from the Birkhoff decomposition for loop groups and Grothendieck's classification of principal bundles on $\bP^1$.

In this note, we establish a real-group analogue of these bijections. To state our results, we first recall some notation and constructions from Lie theory. For simplicity, we assume throughout 
the introduction
that $G$ is simply connected.
Let $\theta_0$ be a pinned involution 
of $G$ and 
let 
$\eta_0$ be the real form of $G$
such that $\theta_0\circ\eta_0=\eta_0\circ\theta_0$ is a compact real form. 
Consider the loop symmetric space
\[X^\theta(F)=\{\gamma\in G(F)|\gamma\theta(\gamma)=e\}\]
where the involution $\theta:G(F)\to G(F)$ is defined by $\theta(\gamma(t))=\theta_0(\gamma(-t))$.
The arc group $G(\cO)$ acts on $X^\theta(F)$ by the $\theta$-conjugation $g\cdot\gamma=g\gamma\theta(g)^{-1}$.
On the other hand,  the 
antipodal 
conjugation
$c:\bP^1\to\bP^1, c(t)=-\bar t^{-1}$ together with the real form 
$\eta_0$ of $G$, gives rise to a real form 
\[\Bun_G(\bP^1)_\eta\] of $\Bun_G(\bP^1)$, which we refer to as  the stack of $G$-bundles on the twistor $\bP^1$. Our key
observation is the 
following connection of the above two sets with the so called 
tempiric Langlands parameters introduced and studied in \cite{AA,ABV}.
Let $\Gamma=\Gal(\bC/\bR)$ be the Galois group of $\bR$.
Following the definition in \emph{loc. cit.}, a Langlands parameter 
$\phi:\rW_\bR\to G^\Gamma=G\rtimes_{\theta_0}\Gamma$   is called tempiric if its restriction 
to  the factor $\bR_{>0}\subset\rW_\bR=\bC^\times\sqcup\bC^\times j$
is trivial. Denote by 
\[\Temp(G,\theta_0)\]
the set of tempiric Langlands paramaters. The group
$G$ acts naturally on $\Temp(G,\theta_0)$ by conjugation, and
two tempiric parameters are said to be equivalent  if they belong to the same $G$-orbit.

Here is our  main result:

\begin{thm}\label{main 1:intro}
There are natural bijections 
\[
    \xymatrix{G(\cO)\backslash X^\theta(F)\ar[rr]\ar[dr]&&G\backslash\Temp(G,\theta_0)\ar[dl]\ar[ll]\\
&\Bun_G(\bP^1)_{\eta}(\bbR)\ar[ul]\ar[ur]&}
\]
among the following sets:
\begin{enumerate}
    \item $G(\cO)\backslash X^\theta(F)$,   the set of
$G(\cO)$-orbits on $X^\theta(F)$,
 \item $\Bun_G(\bP^1)_{\eta}(\bbR)$,  the set
 of isomorphism classes of 
 $G$-bundles on the twistor $\bP^1$,
\item $G\backslash\Temp(G,\theta_0)$, the set of equivalence classes of tempiric Langlands parameters.
\end{enumerate}
Furthermore, suppose that
$\gamma\in X^\theta(F)$, $\phi\in\Temp(G,\theta_0)$, and $\cE\in\Bun_G(\bP^1)_{\eta}(\bbR)$
correspond to one another under these bijections. Then there are natural isomorphisms
\[\pi_0(Z_{G(\cO)}(\gamma))\cong\pi_0(Z_G(\phi))\cong\pi_0(\Aut(\cE))\]
between the component groups of the corresponding stabilizer and automorphism groups.
\end{thm}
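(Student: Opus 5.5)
We will produce all three bijections from a single combinatorial parametrization. Each set will be identified with the set of $K$-conjugacy classes of pairs $(x,\lambda)$. Here $x$ runs over strong real forms (elements of $G\theta_0$ squaring to a central element, up to conjugacy). The entry $\lambda$ is a cocharacter of a $\theta_x$-stable maximal torus, taken modulo the appropriate Weyl group. For simply connected $G$ this should reduce to the following: $\theta_0$-twisted conjugacy classes of elements $\sigma\in X^\theta$ with $\sigma\theta_0(\sigma)=e$, together with a dominant cocharacter $\lambda$ fixed by the twisted involution attached to $\sigma$. Equivalently, we get pairs consisting of a $K$-orbit of involutions and a real-type cocharacter.

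\textbf{Loop side.} We will prove a Cartan-type decomposition for $X^\theta(F)$. Every $G(\cO)$-orbit contains an element of the form $\gamma=t^{\lambda}\sigma$, with $\sigma\in G$, $\sigma\theta_0(\sigma)=e$, and $\lambda$ a cocharacter satisfying $\theta_0\Ad_\sigma(\lambda)=-\lambda$. The sign comes from $t\mapsto -t$ being absorbed by the factor $(-1)^\lambda$.

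The argument runs as follows. Write $\gamma\in G(\cO)t^\mu G(\cO)$. The condition $\gamma\theta(\gamma)=e$ then forces $\mu$ to satisfy $-w_0\theta_0\mu=\mu$ up to the Weyl group. Reducing modulo $t$, the problem becomes the classification of $\theta$-twisted $P_\mu$-orbits. Using a Lang-type argument for $G(\cO)$ (pro-unipotent kernel, odd/even parts in $t$), we reduce to finite-dimensional symmetric-space orbit theory for the Levi $L_\mu$. This is the known description of $K$-orbits in $G/K$ type sets, as in Nadler's real Satake. The component group of $Z_{G(\cO)}(\gamma)$ equals that of its reduction $Z_{L_\mu}(\sigma$-twisted$)$, because the kernel of $G(\cO)\to G$ is pro-unipotent.

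\textbf{Twistor side.} A $G$-bundle on $\bP^1$ with a real structure covering $c$ is, by Grothendieck, $\cE_\mu$ with $\Aut(\cE_\mu)=P_\mu\ltimes U$ with $U$ unipotent. A real structure lifting $c$ exists iff $\mu$ is fixed by $-w_0\eta_0$ (since $c$ swaps $0,\infty$ and reverses degree). The real structures are then classified by the nonabelian Galois cohomology $H^1(\Gamma,\Aut(\cE_\mu))=H^1(\Gamma,L_\mu)$, with $\Gamma$ acting through the induced real form of $L_\mu$. Concretely, glue via the Birkhoff chart $\bP^1=\bA^1_0\cup\bA^1_\infty$. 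A real structure is given by a transition function $g$ with $g\cdot c^*\eta(g)=e$ on the overlap. After the identification $t\mapsto -\bar t^{-1}$ and replacing $\eta_0$ by $\theta_0$ via the compact form, this matches the condition $\gamma\theta(\gamma)=e$. This yields the left bijection via the Birkhoff decomposition $G(\bC[t^{-1}])\backslash G(F)/G(\cO)$. We will match it directly with the loop side: both reduce to the same data $(\mu,[\sigma]\in H^1(\Gamma,L_\mu))$, and automorphism groups have matching $\pi_0$ because unipotent radicals are contractible.

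\textbf{Tempiric side.} Given $\phi$, $\phi|_{\bC^\times}$ is $z\mapsto z^{\lambda}\bar z^{\theta\lambda}$ with trivial $\bR_{>0}$ part. This forces $\theta\lambda=-\lambda$, i.e.\ $\phi(z)=(z/\bar z)^{\lambda/2}$, and $\phi(j)=\sigma\theta_0$ with $(\sigma\theta_0)^2=\phi(-1)=(-1)^\lambda$. Up to $G$-conjugacy we may take $\lambda$ dominant. The remaining freedom is $Z_G(\lambda)=L_\lambda$-conjugacy of $\sigma$, i.e.\ again $H^1(\Gamma,L_\lambda)$ twisted by $(-1)^\lambda$. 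Moreover $Z_G(\phi)=Z_{L_\lambda}(\sigma\theta_0)$. The sign twist is exactly what matches the $(-1)^\lambda$ from $t\mapsto -t$ on the loop side.

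The main obstacle is the loop-side normal form: that every $\gamma\in X^\theta(F)$ is $G(\cO)$-$\theta$-conjugate to some $t^\lambda\sigma$ with $\lambda$ dominant, and that the residual classification is exactly $H^1(\Gamma,L_\lambda)$. I would first try an inductive Lang/Hensel argument on the filtration of $G(\cO)$ by congruence subgroups. The cohomology of $\bZ/2$ in a pro-unipotent group over characteristic $0$ vanishes, so the lifting is unobstructed.
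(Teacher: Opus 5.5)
Your tempiric-side analysis is exactly the paper's one new argument, Lemma~\ref{Y=T}. With $\lambda$ dominant, condition (iii) forces $\lambda=-w_0^{-1}\theta_0(\lambda)$ and $\sigma\in L_\lambda w_0^{-1}$, and then $Z_G(\phi)=Z_{L_\lambda}(\sigma)$. The paper does not reprove the other two sides. Lemma~\ref{Y=X} is \cite[Theorem 13]{CY}, Lemma~\ref{review of Matsuki} is the Matsuki duality of \cite[Theorems 4 and 22]{CY}, and the theorem is their composite with Lemma~\ref{Y=T}.

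Rebuilding those two inputs through a common parametrization would be a reasonable alternative route. As written, however, your three parametrizations do not agree, because the central element $\lambda(-1)$ is mishandled. On the loop side, suppose $\Ad_\sigma\theta_0(\lambda)=-\lambda$. Then $\gamma=t^\lambda\sigma$ satisfies $\gamma\theta(\gamma)=\lambda(t)\,\sigma\theta_0(\lambda(-t))\sigma^{-1}\,\sigma\theta_0(\sigma)=\lambda(-1)\,\sigma\theta_0(\sigma)$. So the normal form must impose $\sigma\theta_0(\sigma)=\lambda(-1)$, not $\sigma\theta_0(\sigma)=e$, and nothing absorbs this sign. Already for $G=\SL_2$, $\theta_0=1$, every $\sigma$ with $\Ad_\sigma\lambda=-\lambda\neq\lambda$ is antidiagonal with $\sigma^2=-I$. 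Your normal form would therefore produce no orbit with $\lambda\neq0$, whereas the orbits with $\lambda\neq 0$ are those of $t^{(2r-1)\check\alpha}w_0$, $r\geq1$, with $w_0^2=-I=\lambda(-1)$.

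The twistor side has the matching error. An isomorphism $\eta(\cE_\mu)\cong\cE_\mu$ does not yield a real structure. It only gives $c$ with $\eta(c)\circ c\in\Aut(\cE_\mu)$, and normalizing this to the identity is a twisted cocycle equation that may have no solution. That equation is $y\eta_0(y)=\lambda(-1)$ in Kottwitz's description, and $y\theta_0(y)=\lambda(-1)$ after passing to compact representatives.

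In the same $\SL_2$ example $\eta_0$ is compact, so the associated rank-two bundle must carry an antilinear lift $J$ of $c$ with $J^2=-1$. The bundle $\mathcal{O}_{\bP^1}(2r)\oplus\mathcal{O}_{\bP^1}(-2r)$ is $\eta$-stable. But $J$ would have to preserve its destabilizing line subbundle $\mathcal{O}_{\bP^1}(2r)$, which over the antipodal map admits a real but no quaternionic structure. So there is no real point of this type, consistent with $Y_{2r\check\alpha,0}=\emptyset$, although $H^1(\Gamma,L_\mu)$ always contains the trivial class. The correct fibre over $\lambda$ is the possibly empty set $L_\lambda\backslash Y_{\lambda,0}$, which becomes an $H^1$ only after a base point $y_0\in Y_{\lambda,0}$ is chosen. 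With $\lambda(-1)$ carried correctly, all three sides reduce to $\bigsqcup_\lambda L_\lambda\backslash Y_{\lambda,0}$, as in the paper.

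Your $\pi_0$ comparison on the twistor side is also incomplete. $\Aut(\cE,c)$ is a real group: an extension of a real form $L_{\lambda,\bR}$ of $L_\lambda$ by a contractible unipotent group. By contrast, $Z_{L_\lambda}(y)$ is the complex fixed-point group of the holomorphic involution $\Ad_y\circ\theta_0$. Contractibility of the unipotent part only reduces you to $\pi_0(L_{\lambda,\bR})$. Identifying this with $\pi_0(Z_{L_\lambda}(y))$ needs a Cartan decomposition for a commuting pair (real form, involution) whose product is a compact form. The same is needed to identify real-form classes with $\theta_0$-twisted classes, which is your step ``replacing $\eta_0$ by $\theta_0$ via the compact form''. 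That input is what the compact representatives of \cite[Theorem 17]{CY} provide, and it is not automatic.
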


The theorem follows by combining Theorem \ref{T=X} with Lemma \ref{review of Matsuki}. In fact, the bijection between the left and bottom terms, which is the content of Lemma \ref{review of Matsuki}, is precisely the Matsuki duality for loop groups established in \cite{CY}. Thus, the main new ingredient is the horizontal bijection, which is the content of Theorem \ref{K types}.

Vogan's theory of minimal $K$-types \cite{V} gives a bijection
\[\xymatrix{
\on{Loc}_G(\Temp(G,\theta_0))\ar[r]&\bigsqcup_{_{i\in I}}\on{Irr}(\check K_i)\ar[l]}\]
between the set
 $\on{Loc}_G(\Temp(G,\theta_0))$ of irreducible $G$-equivariant local systems on $G$-orbits in $\Temp(G,\theta_0)$
and the disjoint union of the sets
$\on{Irr}(\check K_i)$  of irreducible finite dimensional complex representations of the dual symmetric subgroups 
$\check K_i=\check G^{\check\theta_i}$, where $\check\theta_i,i\in I$ ranges over the  inner forms of the dual involution $\check\theta=-w_0\circ\check\theta_0$.
Combining with Theorem \ref{main 1:intro}, we obtain:

\begin{thm}\label{main 2:into}
There are bijections 
\begin{equation}\label{real groups}
    \xymatrix{\on{Loc}_{G(\cO)}(X^\theta(F))\ar[rr]\ar[dr]&&\bigsqcup_{_{i\in I}}\on{Irr}(\check K_i)\ar[dl]\ar[ll]\\
&\on{Loc}(\Bun_G(\bP^1)_{\eta}(\bbR))\ar[ul]\ar[ur]&}
\end{equation}
among the following sets:
\begin{enumerate}
    \item $\on{Loc}_{G(\cO)}(X^\theta(F))$,  the set of irreducible
   $G(\cO)$-equivariant local systems on 
 $G(\cO)$-orbits in $X^\theta(F)$,
 \item $\on{Loc}(\Bun_G(\bP^1)_{\eta}(\bbR))$,  the set
of  pairs $(\cE,\chi)$, where $\cE\in \Bun_G(\bP^1)_{\eta}(\bbR)$ 
and $\chi\in\on{Irr}(\pi_0(\Aut(\cE)))$ is an irreducible
complex representation of $\pi_0(\Aut(\cE))$,
\item $\bigsqcup_{_{i\in I}}\on{Irr}(\check K_i)$, 
the disjoint union of the sets
of irreducible finite dimensional complex representations 
of the  symmetric subgroups 
$\check K_i=\check G^{\check\theta_i}$
where \(\check\theta_i\) ranges over the  inner forms of the dual involution $\check\theta=-w_0\circ\check\theta_0$.
\end{enumerate}

\end{thm}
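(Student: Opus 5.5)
The plan is to deduce Theorem \ref{main 2:into} formally from Theorem \ref{main 1:intro} together with Vogan's bijection between $\on{Loc}_G(\Temp(G,\theta_0))$ and $\bigsqcup_{i\in I}\on{Irr}(\check K_i)$. The key observation is that each of the three sets in (\ref{real groups}) is a set of pairs: a point in a set of orbits (or isomorphism classes), together with an irreducible representation of a component group.

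\emph{Step 1: the local systems.} For an action of a group $H$ on a set, with finitely many components of stabilizers, irreducible $H$-equivariant local systems on the orbit $H\cdot x\cong H/H_x$ correspond to irreducible representations of $\pi_0(H_x)$. This correspondence is independent of the choice of base point, up to the canonical conjugation isomorphisms.
\begin{itemize}
\item For $H=G(\cO)$ acting on $X^\theta(F)$, the stabilizer $Z_{G(\cO)}(\gamma)$ is pro-algebraic, and its component group is the finite group $\pi_0$ appearing in Theorem \ref{main 1:intro}. So $\on{Loc}_{G(\cO)}(X^\theta(F))$ is the set of pairs $(\gamma,\chi)$, with $\gamma$ running over $G(\cO)\backslash X^\theta(F)$ and $\chi\in\on{Irr}(\pi_0(Z_{G(\cO)}(\gamma)))$.
\item Likewise, $\on{Loc}_G(\Temp(G,\theta_0))$ is the set of pairs $(\phi,\chi)$ with $\chi\in\on{Irr}(\pi_0(Z_G(\phi)))$.
\item The set $\on{Loc}(\Bun_G(\bP^1)_\eta(\bbR))$ is by definition the set of pairs $(\cE,\chi)$.
\end{itemize}

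\emph{Step 2: transport.} Theorem \ref{main 1:intro} matches the first coordinates of these pairs bijectively. The natural isomorphisms
\[\pi_0(Z_{G(\cO)}(\gamma))\cong\pi_0(Z_G(\phi))\cong\pi_0(\Aut(\cE))\]
then identify the sets of irreducible representations of the second coordinates. This produces the bijections among the three pair-sets $\on{Loc}_{G(\cO)}(X^\theta(F))$, $\on{Loc}_G(\Temp(G,\theta_0))$ and $\on{Loc}(\Bun_G(\bP^1)_\eta(\bbR))$. Composing with Vogan's bijection gives the two arrows into $\bigsqcup_i\on{Irr}(\check K_i)$. The triangle commutes because the triangle in Theorem \ref{main 1:intro} commutes on orbits and the component-group isomorphisms are compatible.

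\emph{Step 3: naturality.} The main point to verify is that the isomorphisms of component groups are canonical, i.e. independent of the chosen representatives $\gamma,\phi,\cE$. Otherwise the identification of $\on{Irr}$ sets would only be defined up to outer automorphisms.
\begin{itemize}
\item Changing $\gamma$ by $g\in G(\cO)$ conjugates the stabilizer by $g$, which induces an inner automorphism on $\pi_0$ when restricted appropriately. Similarly for $\phi$ and $\cE$.
\item Inner automorphisms act trivially on isomorphism classes of representations, so the resulting bijection of $\on{Irr}$ sets is well defined.
\end{itemize}

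I expect this compatibility check to be the only real obstacle. It amounts to confirming that the constructions in Theorem \ref{T=X} and Lemma \ref{review of Matsuki} are equivariant with respect to the relevant conjugations. I would check it directly from those constructions.
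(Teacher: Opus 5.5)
Your proposal is correct and is essentially the paper's argument: you transport the enhancements along the orbit bijections and component-group isomorphisms of Theorem \ref{main 1:intro} (concretely Theorem \ref{T=X enhanced}(1) and Lemma \ref{review of Matsuki}(2)), then compose with Vogan's minimal $K$-type bijection, and your Step 3 check is automatic from the definition $\on{Loc}_G(X)=G\backslash X^{G,enh}$ together with the $L_\lambda$-equivariance of $y\mapsto\gamma_{\lambda,y}$ and $y\mapsto\phi_{\lambda,y}$. One point to make explicit is that Vogan's bijection really relates $G^{alg}$-enhanced parameters to all of $I$, and it applies verbatim to $\on{Loc}_G$ only because the introduction assumes $G$ simply connected (so $G^{alg}=G$, and $\check G$ is adjoint, giving $I=I_{2\check\rho(-1)}$); for general $G$ one needs Lemma \ref{pure}, and the target for $\on{Loc}_{G(\cO)}$ shrinks to the pure involutions, as in Theorem \ref{K types}.
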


The theorem is restated as Theorem \ref{K types}. In view of the bijections~\eqref{complex groups}, we regard the bijections~\eqref{real groups} as a combinatorial shadow of geometric Langlands duality for real reductive groups.

Recall the celebrated geometric Satake equivalence 
\[\on{Perv}(G(\cO)\backslash G(F)/G(\cO))\cong\on{Rep}(\check G)\]
which upgrades the horizontal bijection in~\eqref{complex groups} to a tensor equivalence between the category of $G(\cO)\times G(\cO)$-equivariant perverse sheaves on $G(F)$ and the category $\on{Rep}(\check G)$ of finite-dimensional complex representations of the dual group $\check G$.
Consider 
 the abelian category
$\mathrm{Perv}(G^{}(\cO)\backslash X
^\theta(F))$
of $G(\cO)$-equivariant perverse sheaves on $X^\theta(F)$ and the direct sum category 
$\bigoplus_{i\in I}\on{Rep}(\check K_i)$ of finite dimensional complex representations of the dual symmetric subgroups $\check K_i$.
Motivated by the results of this paper and the recent work \cite{C} on Relative Langlands duality for loop symmetric spaces, we propose a real-group analogue of the geometric Satake equivalence, upgrading the horizontal bijection in ~\eqref{main 2:into} to an equivalence of module categories over $\on{Rep}(\check G)$:

\begin{conj}
There is an equivalence of abelian categories
\[\mathrm{Perv}(G^{}(\cO)\backslash X
^\theta(F))\cong\bigoplus_{i\in I}\on{Rep}(\check K_i)\]
such that the induced map between irreducible objects is given by the bijection~\eqref{real groups}.
Furthermore, the equivalence intertwines the Hecke action of 
$\on{Perv}(G(\cO)\backslash G(F)/G(\cO))$ and the restriction functor $\on{Rep}(\check G)\to\on{Rep}(\check K_i)$ under the geometric Satake equvalence.
\end{conj}

We refer to 
Section \ref{conj} to more 
detailed explanation of the conjecture, including generalizations 
to general reductive groups and strong involutions.

\subsection*{Acknowledgements} 
The research of
T.-H.~Chen is supported by NSF grant DMS-2143722 and Simons Fellowships.
Lingfei Yi is supported by the Grant No. JIH1414062Y of Fudan University.

\section{Local systems}
Let $G$ be a Lie group acting on a set $X$, and
 denote by $G\backslash X$ the set of $G$-orbits on $X$.

Assume that, for every $x\in X$, the
stabilier $Z_G(x)=\{g\in G|g\cdot x=x\}$ is a closed subgroup of $G$.
Let $\on{Irr}(\pi_0(Z_G(x))$ denote the set of isomorphism classes of irreducible complex representations of the component group $\pi_0(Z_G(x))$.
Then the enhanced $G$-set of $X$
is by definition the set 
\[X^{G,enh}=\{(x,\chi)| x\in X, \chi\in\on{Irr}(\pi_0(Z_G(x))\}.\]
 We have a natural $G$-action on $X^{G,enh}$
 given by $g\cdot(x,\chi)=(g\cdot x,g\chi g^{-1})$.
We write 
\[\on{Loc}_G(X)=G\backslash X^{G,enh}\]
for the set of $G$-orbits on $X^{enh}$, to be called the set  of irreducible $G$-equivariant local systems on $X$.

\section{Tempiric parameters}\label{Tempiric parameters}
Let $G$ be a complex connected reductive group.
We denote by $G^{alg}\to G$ the pro-algebraic group of the projective limit of finite coverings of $G$.
We have a short exact sequence
\[1\to\pi_1(G)^{alg}\to G^{alg}\to G\to 1\]
where $\pi_1(G)^{alg}$ is the projective limit of 
finite quotients of the fundamental group $\pi_1(G)$ of $G$.

We fix a pinning $(G,B,T,X=\{X_\alpha\}_{\alpha\in\Delta})$ of $G$ and 
a pinned involution $\theta_0\in\on{Aut}(G,B,T,X)$.
Let $\Gamma=\on{Gal}(\bC/\bR)=\{1,\delta\}$ be the 
Galois group of $\bR$ and let 
\[G^\Gamma=G\rtimes_{\theta_0}\Gamma\] be the $L$-group of $\check G$, where $\delta$ acts on $G$ via $\theta_0$.
Let $\rW_\bbR=\bC^\times\sqcup\bC^\times j$, $j^2=-1$ and $jzj^{-1}=\bar z$, be the Weil group of $\bR$.
We have a group extension
\[1\to\bC^\times\to\rW_\bR\to\Gamma\to 1.\]
Let $S^1\subset\bC^\times$ be the unit circle.
Denote by $\rW_\bR^{cpt}=S^1\sqcup S^1j\subset\rW_\bR$ the compact Weil group.
The polar decomposition $\bC^\times\cong S^1\times\bR_{>0}$ induces an isomorphism
\[\rW_\bR\cong\rW_\bR^{cpt}\times\bR_{>0}.\]

We recall the notion of tempiric parameters 
in \cite{AA,ABV}.

\begin{defe}
A tempiric parameter is a Langlands parameter
$\phi:\rW_\bR\to G^\Gamma$ 
whose restriction to the factor 
$\bR_{>0}\subset\rW_\bR\cong\rW_\bR^{cpt}\times\bR_{>0}$ is trivial. 

\end{defe}

Let $\phi:\rW_\bR\to G^\Gamma$ be a tempiric parameter.
Then $\phi$ is completely determined by its restriction to the compact Weil group $\rW_\bR^{cpt}$.
We have $\phi(j)=y\delta$ for some $y\in G$ and the restriction $\phi|_{S^1}:S^1\to G\subset G^\Gamma$
gives rise to a unique cocharacter $\lambda:\mathbb G_m\to G$
such that $\phi(u)=\lambda(u)$, $u\in S^1$.
It follows  that 
the set of tempiric parameters is in bijection with the set consisting of pairs $(\lambda,y)$
where

    \begin{itemize}
	\item [(i)] $\lambda:\bGm\rightarrow G$ is a cocharacter over $\bC$,
	\item [(ii)] $y\in G$ such that $y\theta_0(y)=\lambda(-1)$,
	\item [(iii)] $y\theta_0(\lambda(z)) y^{-1}=\lambda(z^{-1})$, for $z\in\bC^\times$.
    \end{itemize}
For any such pair $(\lambda,y)$ we write $\phi_{\lambda,y}:\rW_\bR\to  G^\Gamma$ the corresponding 
tempiric parameter.

We denote by $\on{Temp}(G,\theta_0)$ the set of tempiric parameters. 
We have a natural conjugation action of $G$ on $\on{Temp}(G,\theta_0)$. If $\phi=\phi_{\lambda,y}$, then the action is given by
 \[g\cdot\phi_{\lambda,y}=\phi_{g\lambda g^{-1},gy\theta_0(g)^{-1}}.\]
Via the projection $G^{alg}\to G$, we get 
a $G^{alg}$-action on $\on{Temp}(G,\theta_0)$.
It is known that, for any $\phi\in\Temp(G,\theta_0)$, $\pi_0(Z_G(\phi))$ is a finite $2$-group and 
 $\pi_0(Z_{G^{alg}}(\phi))$
 is an abelian group (possibly infinite).

\begin{defe}\label{enh}
(1) 
Two tempiric parameters  are called equivalent if they lie in the same $G$-orbit.
We denote by $G\backslash\on{Temp}(G,\theta_0)$ the set of $G$-orbits, referred to as equivalence classes of tempiric parameters.

(2)  Elements in the set $\Temp(G,\theta_0)^{G,enh}=\{(\phi,\chi)|\phi\in\Temp(G,\theta_0), \chi\in\on{Irr}(\pi_0(Z_G(\phi)))\}$
are called $G$-enhanced tempiric parameters.
The set of irreducible $G$-local systems
\[\on{Loc}_G(\Temp(G,\theta_0))=G\backslash\Temp(G,\theta_0)^{G,enh}\]
is referred as equivalence classes of $G$-enhanced tempiric parameters.

(3)  Elements in the set $\Temp(G,\theta_0)^{G^{alg},enh}=\{(\phi,\tilde\chi)|\phi\in\Temp(G,\theta_0), \chi\in\on{Irr}(\pi_0(Z_{G^{alg}}(\phi))\}$
are called $G^{alg}$-enhanced tempiric parameters.
The set of irreducible $G^{alg}$-local systems
\[\on{Loc}_{G^{alg}}(\Temp(G,\theta_0))=G^{alg}\backslash\Temp(G,\theta_0)^{G^{alg},enh}\]
is referred as equivalence classes of $G^{alg}$-enhanced tempiric parameters.
    
\end{defe} 

According to \cite[page 61]{ABV}, the natural map $\pi_0(Z_{G^{alg}}(\phi))\to \pi_0(Z_{G^{}}(\phi))$ 
is surjective and it follows that 
the inflation of a 
character $\chi:\pi_0(Z_{G^{}}(\phi))\to\bC^\times$
along the covering $\pi_0(Z_{G^{alg}}(\phi))
\to\pi_0(Z_{G^{}}(\phi))$
induces a natural embedding 
\begin{equation}\label{inclusion}
    \on{Loc}_{G}(\Temp(G,\theta_0))\hookrightarrow \on{Loc}_{G^{alg}}(\Temp(G,\theta_0)).
\end{equation}

\section{Loop symmetric spaces}\label{loop}
Let $F=\bC((t))$ and $\cO=\bC[[t]]$
be the formal Laurent series and Taylor series rings.
Let $G(F)$ and $G(\cO)$ be the loop group and arc group of $G$. Consider the involution \[\theta:G(F)\to G(F),\ \ \theta(\gamma)(t)=\theta_0(\gamma(-t)).\]
The $\theta$-anti fixed points
\[X^\theta(F)=\{\gamma\in G(F)|\gamma\theta(\gamma)=1\}\]
is called the loop symmetric space of $G(F)$.
The loop group $G(F)$ acts on $X^\theta(F)$ via the 
$\theta$-conjugation $g\cdot\gamma=g\gamma\theta(g)^{-1}$
and we denote by $G(\cO)\backslash X^\theta(F)$ the set of orbits.

In \cite{CY}, we obtain the following parameterization of 
$G(\cO)\backslash X^\theta(F)$. Pick a compact real form 
$\eta_{c,0}:G\to G$ such that $\eta_{c,0}(T)=T$
and $\eta_{c,0}\circ\theta_0=\theta_0\circ\eta_{c,0}=:\eta_0$.
Let $G_c=G^{\eta_{c,0}}$ and $T_c=T^{\eta_{c,0}}$
be the corresponding maximal compact subgroups.
Let $\rW=N_{G_c}(T_c)/T_c$ be the Weyl group.
We fix a representative $w_0\in N_{G_c}(T_c)$
of the longest element in the Weyl group.
Let $X_*(T)^+$ be the set of dominant cocharacters of $T$ with respect to $B$.
For any $\lambda\in X_*(T)^+$, we denote by 
$L_\lambda=Z_G(\lambda)$ the corresponding Levi subgroup of centralizers of $\lambda$.

\begin{defe}
    For any $\lambda\in X_*(T)^+$ satisfying $\lambda=-w_0^{-1}\theta_0(\lambda)$, we let
    \[Y_{\lambda,0}=\{y\in L_\lambda w_0^{-1}|y\theta_0(y)=\lambda(-1)\}.\]
\end{defe}

Note that $\on{Ad}_{w_0^{-1}}\theta_0(L_\lambda)=L_\lambda$ 
since $\lambda=-w_0^{-1}\theta_0(\lambda)$.
Hence the
Levi subgroup $L_\lambda$ acts on $Y_{\lambda,0}$ by $g\cdot y=gy\theta_0(y)^{-1}$ and 
we denote by $L_\lambda\backslash Y_{\lambda,0}$ the set of orbits.

\begin{lem}\label{Y=X}
    The assignment $y\to \gamma_{\lambda,y}=t^\lambda y$
    induces a bijection on obits
    \[
    \bigsqcup_{\lambda=-w_0^{-1}\theta_0(\lambda)\in X_*(T)^+} L_\lambda\backslash Y_{\lambda,0}
    \cong G(\cO)\backslash X^\theta(F).
    \]
    Moreover, the natural inclusion $Z_{L_\lambda}(y)\to Z_{G(\cO)}(\gamma_{\lambda,y})$ of stabilizers 
    induces an isomorphism
    \[\pi_0(Z_{L_\lambda}(y))\cong\pi_0(Z_{G(\cO)}(\gamma_{\lambda,y}))\]
    on component groups.
\end{lem}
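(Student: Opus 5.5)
The proof will mirror the classical Cartan decomposition argument, run inside the $\theta$-twisted setting.

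\emph{Step 1: well-definedness and orbit preservation.} For $y\in Y_{\lambda,0}$ we have $\theta(t^\lambda)=\theta_0(\lambda)(-t)=\theta_0(\lambda)(-1)\,t^{\theta_0(\lambda)}$. Hence
\[
\gamma_{\lambda,y}\theta(\gamma_{\lambda,y})=t^\lambda\,y\,\theta_0(\lambda)(-1)\,t^{\theta_0(\lambda)}\,\theta_0(y).
\]
Write $y=l w_0^{-1}$ with $l\in L_\lambda$. Then $\Ad_y\theta_0(\lambda)=\Ad_l(-\lambda)=-\lambda$, because $\lambda=-w_0^{-1}\theta_0(\lambda)$. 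This lets us move $t^{\theta_0(\lambda)}$ past $y$ as $t^{-\lambda}$, and the condition $y\theta_0(y)=\lambda(-1)$ then gives $\gamma\theta(\gamma)=1$; the sign bookkeeping with $\lambda(-1)$ and $\theta_0(\lambda)(-1)$ still has to be checked. Next, $g\in L_\lambda$ commutes with $t^\lambda$ and is $t$-constant, so $\theta(g)=\theta_0(g)$ and $g\cdot\gamma_{\lambda,y}=\gamma_{\lambda,g\cdot y}$. Thus the map descends to orbits.

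\emph{Step 2: surjectivity.} Take $\gamma\in X^\theta(F)$. By the Cartan decomposition $\gamma\in G(\cO)t^\mu G(\cO)$ for a unique dominant $\mu$. Since $\theta$ preserves $G(\cO)$ and $\gamma^{-1}=\theta(\gamma)\in G(\cO)t^{\theta_0(\mu)}G(\cO)$, we get $\mu=-w_0\theta_0(\mu)$, which is the stated condition. Write $\gamma=g_1t^\mu g_2$ and twist by $g_1^{-1}$ to get $\gamma'=t^\mu h$ with $h\in G(\cO)$.

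The idea is to reduce $h$ modulo $t$ using the stabilizer of $t^\mu$ in $G(\cO)\times G(\cO)$. Concretely, twisted conjugation by elements of $G(\cO)\cap t^\mu G(\cO)t^{-\mu}$, whose reduction mod $t$ is the parabolic $P_\mu^-$, together with the unipotent radical of the positive loop parahoric, lets us bring $h$ to a constant element of $L_\mu w_0^{-1}$. This is a $\theta$-twisted analogue of the argument that $G(\cO)\backslash G(\cO)t^\mu G(\cO)$ fibres over $G/P_\mu$, and it is where I expect the main obstacle. I would handle it by induction on the $t$-adic filtration, first modulo $t^n$ and then passing to the limit, using that $H^1$ of the involution on a pro-unipotent group is trivial. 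Concretely, $\{u\theta(u)^{-1}\}$ exhausts the anti-invariants of $G(\cO)_1$ on each graded piece because $2$ is invertible. In the limit the relation $\gamma\theta(\gamma)=1$ becomes $y\theta_0(y)=\lambda(-1)$.

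\emph{Step 3: injectivity.} If $t^\lambda y$ and $t^\lambda y'$ are $G(\cO)$-conjugate, then comparing mod-$t$ leading terms in the loop parahoric shows the conjugating element can be taken in $L_\lambda$ after the same pro-unipotent correction. Different $\lambda$ lie in different Cartan cells, so they give distinct orbits.

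\emph{Step 4: component groups.} $Z_{G(\cO)}(\gamma_{\lambda,y})$ is contained in $G(\cO)\cap t^\lambda G(\cO)t^{-\lambda}$. This group is a semidirect product $L_\lambda\ltimes U$ with $U$ pro-unipotent and $\theta$-stable under the twisted action. The stabilizer is then $Z_{L_\lambda}(y)\ltimes U^{\gamma}$ with $U^\gamma$ contractible, being the fixed points of an involution on a pro-unipotent group, hence connected. Therefore $\pi_0$ agrees with $\pi_0(Z_{L_\lambda}(y))$. Alternatively, all of this can be cited directly from \cite{CY}.
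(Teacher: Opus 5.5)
Your outline is sound, but it takes a genuinely different route from the paper.

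\textbf{Comparison with the paper.} The paper proves nothing new here. It rewrites $Y_{\lambda,0}$ as the set $A_{\lambda,0}\subset L_\lambda$ via $g\mapsto gw_0^{-1}$ and quotes \cite[Theorem 13]{CY}. The only check is that, since $\theta_0$ is pinned, $\theta_0(B^-)=\Ad_{w_0}(B)$, so the auxiliary element $w_1$ of \emph{loc.\ cit.} may be taken to be $w_0$. You instead reconstruct the content of that theorem:
\begin{enumerate}
\item the Cartan decomposition, to fix $\lambda$ and force $\lambda=-w_0\theta_0(\lambda)$;
\item the group $K_\lambda=G(\cO)\cap t^\lambda G(\cO)t^{-\lambda}=L_\lambda\ltimes J$, where $J$ is the pro-unipotent kernel of $k\mapsto$ (Levi part of $k(0)$);
\item vanishing of nonabelian $H^1(\bZ/2,J)$.
\end{enumerate}
Steps 1, 3 and 4 are correct. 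In Step 1 the signs work out because $y\,\theta_0(\lambda)(s)\,y^{-1}=\lambda(s)^{-1}$. In Step 3 no pro-unipotent correction is needed: if $g\cdot t^\lambda y=t^\lambda y'$ with $g\in G(\cO)$, then $g\in K_\lambda$. Comparing Levi parts mod $t$ in $t^{-\lambda}gt^\lambda=y'\theta(g)y^{-1}$ gives $l\,y\,\theta_0(l)^{-1}=y'$, where $l$ is the Levi part of $g(0)$. Your route buys a self-contained proof and the explicit description $Z_{G(\cO)}(\gamma_{\lambda,y})=Z_{L_\lambda}(y)\ltimes J^{\sigma}$. The paper's route buys brevity by delegating exactly this work to \cite{CY}.

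\textbf{What needs repair: the mechanism in Step 2.} Twisted conjugation by $G(\cO)_1$ does not preserve the shape $t^\lambda h$. Also, $\theta$ is not the involution governing the problem. So the anti-invariance argument on $G(\cO)_1$, as you phrase it, does not apply. A working version is as follows. Write $P^{\pm}_\lambda$ for the reductions of $G(\cO)\cap t^{\mp\lambda}G(\cO)t^{\pm\lambda}$.
\begin{enumerate}
\item The relation $\gamma\theta(\gamma)=1$ with $\gamma=t^\lambda h$ forces, after absorbing a constant of $T$, $h=h_1w_0^{-1}$ with $h_1\in G(\cO)\cap t^{-\lambda}G(\cO)t^{\lambda}$. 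In particular $h(0)\in P^+_\lambda w_0^{-1}$.
\item Let $l$ be the Levi part of $h_1(0)$ and set $y=lw_0^{-1}$. Write $\gamma\theta(\gamma)$ as a product of elements of $K_\lambda$ and take Levi parts of its reduction in $P^-_\lambda$. This gives $y\theta_0(y)=\lambda(-1)$ immediately, not ``in the limit''. Hence $\gamma_0=t^\lambda y\in X^\theta(F)$.
\item Set $\xi=\gamma\gamma_0^{-1}=t^\lambda h_1l^{-1}t^{-\lambda}$. It lies in $J$ and satisfies $\xi\,\sigma(\xi)=1$ for $\sigma=\Ad_{\gamma_0}\circ\theta$, which preserves $K_\lambda$, $L_\lambda$ and $J$.
\item Vanishing of $H^1$ of $\sigma$ on the pro-unipotent group $J$ is standard in characteristic $0$: use $\sigma$-stable normal subgroups with unipotent quotients and the invertibility of $2$. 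It gives $\xi=g\,\sigma(g)^{-1}$ with $g\in J$, i.e.\ $\gamma=g\cdot\gamma_0$.
\end{enumerate}
This uses the same $J$ and the same $\sigma$ as your Step 4, so all the ingredients are already in your proposal.
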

\begin{proof}
    Consider the set 
    $A_{\lambda,0}=\{g\in L_\lambda|g=\theta_0(w_0)\theta_0(g^{-1})w_0\lambda (-1)\}$.
    Then $L_\lambda$ acts on $A_{\lambda,0}$ by 
    $h\cdot g=hg\Ad_{w_0^{-1}}\theta_0(h^{-1})$ and a direct computation shows that there is a 
   $L_\lambda$-equivariant bijection 
    \[A_{\lambda,0}\cong L_{\lambda,0}, \ \ g\to y=gw_0^{-1}.\]
Thus we reduce to check that 
the map sending $g\to t^\lambda gw_0^{-1}$
induces a bijection
\[\bigsqcup_{\lambda=-w_0^{-1}\theta_0(\lambda)\in X_*(T)^+} L_\lambda\backslash A_{\lambda,0}\cong G(\cO)\backslash X^\theta(F).\]
The desired claim is proved in \cite[Theorem 13]{CY}.
Indeed, since $\theta_0$ is a pinned involution, we have $\theta_0(B^-)=\Ad_{w_0}(B)$ and hence 
the element $w_1$ in \emph{loc. cit.} can be chosen as $w_1=w_0$.
\end{proof}

Regard $\lambda$ as a cocharacter of $G$.
Note that for any $y=gw_0^{-1}\in Y_{\lambda,0}$ 
we have 
\[y\theta_0(\lambda)y^{-1}=gw_0^{-1}\theta_0(\lambda)w_0g^{-1}=g\lambda^{-1}g^{-1}=\lambda^{-1}.\]
Thus the pair $(\lambda,y)$ satisfies condition $(i)-(iii)$
in Section \ref{Tempiric parameters} and we get a natural $L_\lambda$-equivariant inclusion
\[Y_{\lambda,0}\to\Temp(G,\theta_0),\ \ y\to \phi_{\lambda,y}.\]

\begin{lem}\label{Y=T}
    The assignment $y\to \phi_{\lambda,y}$ above induces a 
    bijection
    \[\bigsqcup_{\lambda=-w_0^{-1}\theta_0(\lambda)\in X_*(T)^+} L_\lambda\backslash Y_{\lambda,0}\cong G\backslash \Temp(G,\theta_0).\]
    Moreover, the inclusion $L_\lambda\subset G$ induces an isomorphism
  \[Z_{L_\lambda}(y)\cong Z_{G}(\phi_{\lambda,y})\]
  between the corresponding stabilizer subgroups.
\end{lem}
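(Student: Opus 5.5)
We prove the bijection by normalizing the cocharacter first and then the element $y$, using the $G$-action $g\cdot\phi_{\lambda,y}=\phi_{g\lambda g^{-1},gy\theta_0(g)^{-1}}$.

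\emph{Normalizing $\lambda$.} Let $\phi=\phi_{\lambda,y}$ be a tempiric parameter. Every cocharacter is $G$-conjugate to a unique dominant $\lambda\in X_*(T)^+$, so after acting by $G$ we may assume $\lambda$ is dominant. The stabilizer of a dominant $\lambda$ under conjugation is $L_\lambda=Z_G(\lambda)$. Hence the $G$-orbits of tempiric parameters are in bijection with the disjoint union over dominant $\lambda$ of the $L_\lambda$-orbits on
\[\{y\in G \mid y\theta_0(y)=\lambda(-1),\ y\theta_0(\lambda)y^{-1}=\lambda^{-1}\}.\]

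\emph{Forcing $\lambda=-w_0^{-1}\theta_0(\lambda)$.} Condition (iii) says $\Ad_y$ conjugates $\theta_0(\lambda)$ to $-\lambda$. Since $\theta_0$ is pinned, $\theta_0(\lambda)$ is dominant, and $-w_0^{-1}\theta_0(\lambda)$ is then the dominant representative of the conjugacy class of $-\theta_0(\lambda)$. This class contains $\lambda$, so uniqueness of the dominant representative gives $\lambda=-w_0^{-1}\theta_0(\lambda)$. Consequently $w_0^{-1}\theta_0(\lambda)w_0=\lambda^{-1}$.

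\emph{Identifying the set of $y$ with $Y_{\lambda,0}$.} Condition (iii) becomes $y\,w_0\lambda^{-1}w_0^{-1}\,y^{-1}=\lambda^{-1}$, that is, $yw_0$ centralizes $\lambda$. So $yw_0\in L_\lambda$, i.e. $y\in L_\lambda w_0^{-1}$. Conversely, the computation preceding the lemma shows that every element of $Y_{\lambda,0}$ satisfies (iii). Together with (ii), this identifies the set above with $Y_{\lambda,0}$, and the $L_\lambda$-actions agree. This proves the bijection on orbits.

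\emph{Stabilizers.} If $g\in Z_G(\phi_{\lambda,y})$, then $g$ fixes $\lambda$, so $g\in L_\lambda$, and $gy\theta_0(g)^{-1}=y$. This is exactly the stabilizer of $y$ in $L_\lambda$, so $Z_{L_\lambda}(y)=Z_G(\phi_{\lambda,y})$, and the isomorphism is induced by the inclusion $L_\lambda\subset G$.

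\emph{Main obstacle.} The only step requiring care is showing that the condition $\lambda=-w_0^{-1}\theta_0(\lambda)$ is forced. It rests on two facts: a pinned $\theta_0$ preserves dominance, and $-w_0$ maps the dominant chamber to itself. We also need to state clearly that $w_0^{-1}\theta_0(\lambda)w_0$ means conjugation of the cocharacter.
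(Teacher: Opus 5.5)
Your proposal is correct and follows essentially the same route as the paper. You conjugate $\lambda$ to be dominant, use uniqueness of the dominant representative to force $w_0^{-1}\theta_0(\lambda)w_0=\lambda^{-1}$ and $y\in L_\lambda w_0^{-1}$, and get injectivity and the stabilizer statement because any $g$ relating two parameters with dominant $\lambda$ must lie in $L_\lambda=Z_G(\lambda)$. The only cosmetic difference is that you compare $\lambda$ and $-w_0^{-1}\theta_0(\lambda)$ as dominant cocharacters, while the paper compares $\lambda^{-1}$ and $w_0^{-1}\theta_0(\lambda)$ as anti-dominant ones.
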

\begin{proof}
Let $(\lambda,y)$ be the pair satisfying $(i)-(iii)$ in Section \ref{Tempiric parameters}.
Replacing $\lambda$ by its $G$-conjugate, we can assume 
$\lambda\in X_*(T)^+$. Since both $w_0^{-1}\theta_0(\lambda)$ and $\lambda^{-1}$ are anti-dominant, 
the equation $(iii)$ $y\theta_0(\lambda)y^{-1}=\lambda^{-1}$ implies that 
$w_0^{-1}\theta_0(\lambda)=\lambda^{-1}$ and
$y=gw_0^{-1}$ with $g\in L_\lambda$. Thus the mapping 
$y\to \phi_{\lambda,y}$ induces a surjection on orbits.
The injectivity and the claim on stabilizers follow
 since \[g\cdot\phi_{\lambda,y}=\phi_{g\lambda g^{-1},gy\theta_0(g)^{-1}}=\phi_{\lambda',y'}\]
with $\lambda,\lambda'\in X_*(T)^+$
implies  $\lambda=\lambda'$ and $g\in L_\lambda$.
\end{proof}

Combining Lemma \ref{Y=X} and Lemma \ref{Y=T}, we obtain

\begin{thm}\label{T=X}
    The assignment $\phi_{\lambda,y}\to \gamma_{\lambda,y}=t^\lambda y$
    induces a bijection
    \[ G\backslash\Temp(G,\theta_0)\cong G(\cO)\backslash X^\theta(F).\]
    Moreover, the natural inclusion 
    $Z_G(\phi_{\lambda,y})\to Z_{G(\cO)}(\gamma_{\lambda,y})$ indcues an isomorphism on component groups
    \[\pi_0(Z_G(\phi_{\lambda,y}))\cong \pi_0(Z_{G(\cO)}(\gamma_{\lambda,y})).\]
\end{thm}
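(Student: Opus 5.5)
The plan is to compose the two bijections already established. Lemma \ref{Y=T} identifies $G\backslash\Temp(G,\theta_0)$ with $\bigsqcup_{\lambda} L_\lambda\backslash Y_{\lambda,0}$ via $y\mapsto\phi_{\lambda,y}$. Lemma \ref{Y=X} identifies the same disjoint union with $G(\cO)\backslash X^\theta(F)$ via $y\mapsto\gamma_{\lambda,y}=t^\lambda y$. Inverting the first and composing with the second gives a bijection sending the class of $\phi_{\lambda,y}$ to the class of $\gamma_{\lambda,y}$.

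The first thing to check is that this assignment is well defined on all of $G\backslash\Temp(G,\theta_0)$. The proof of Lemma \ref{Y=T} shows that every tempiric parameter is $G$-conjugate to some $\phi_{\lambda,y}$ with $\lambda\in X_*(T)^+$, $\lambda=-w_0^{-1}\theta_0(\lambda)$ and $y\in Y_{\lambda,0}$. It also shows that two such normalized representatives are $G$-conjugate exactly when they have the same $\lambda$ and $L_\lambda$-conjugate $y$. On the other side, $L_\lambda$-conjugate elements $y$ give $G(\cO)$-conjugate loops $\gamma_{\lambda,y}$ by Lemma \ref{Y=X}. Hence $\phi_{\lambda,y}\mapsto\gamma_{\lambda,y}$ respects orbits, and bijectivity is inherited from the two lemmas.

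For the stabilizers, fix $(\lambda,y)$ and consider the commutative triangle of inclusions
\[Z_{L_\lambda}(y)\to Z_G(\phi_{\lambda,y}),\qquad Z_{L_\lambda}(y)\to Z_{G(\cO)}(\gamma_{\lambda,y}),\qquad Z_G(\phi_{\lambda,y})\to Z_{G(\cO)}(\gamma_{\lambda,y}).\]
By Lemma \ref{Y=T}, the first map is an isomorphism of groups, so $Z_G(\phi_{\lambda,y})=Z_{L_\lambda}(y)$ as subgroups of $G$. In particular $Z_G(\phi_{\lambda,y})$ lies in $L_\lambda\subset G\subset G(\cO)$, where $G$ sits in $G(\cO)$ as the constant loops.

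It remains to see that this subgroup fixes $\gamma_{\lambda,y}$. Take $g\in Z_{L_\lambda}(y)$. Since $g$ is a constant loop, $\theta(g)=\theta_0(g)$, and since $g\in L_\lambda$ it commutes with $t^\lambda$. Therefore
\[g\cdot\gamma_{\lambda,y}=g\,t^\lambda y\,\theta_0(g)^{-1}=t^\lambda\, g y\theta_0(g)^{-1}=t^\lambda y=\gamma_{\lambda,y}.\]
This shows the third map in the triangle is the natural inclusion and that the triangle commutes. Applying $\pi_0$, the first map is an isomorphism by Lemma \ref{Y=T} and the second by Lemma \ref{Y=X}. So the third map induces an isomorphism $\pi_0(Z_G(\phi_{\lambda,y}))\cong\pi_0(Z_{G(\cO)}(\gamma_{\lambda,y}))$.

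I do not expect a real obstacle here, since the substantive input (the loop-group Matsuki-type parametrization) is already packaged in Lemma \ref{Y=X}. The only point needing care is the compatibility of the two inclusions of $Z_{L_\lambda}(y)$, i.e.\ the identity $\theta(g)=\theta_0(g)$ for constant $g$ together with $[g,t^\lambda]=1$ for $g\in L_\lambda$. I would verify this compatibility explicitly rather than leave it implicit.
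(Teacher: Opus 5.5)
Your proposal is correct and follows the paper's argument: the paper proves the theorem by composing Lemma \ref{Y=T} and Lemma \ref{Y=X} through $\bigsqcup_\lambda L_\lambda\backslash Y_{\lambda,0}$, and compares stabilizers through $Z_{L_\lambda}(y)$. Your explicit check that $g\,t^\lambda y\,\theta_0(g)^{-1}=t^\lambda y$ for $g\in Z_{L_\lambda}(y)$, which makes the triangle of inclusions commute, is a detail the paper leaves implicit.
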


Let $G^{alg}(\cO)$ be the arc group of $G^{alg}$ and consider its action on $X^{\theta}(F)$ through the natural map $G^{alg}(\cO)\to G^{}(\cO)$.

\begin{thm}\label{T=X enhanced}
\begin{itemize}
    \item [(1)]
    There is a natural bijection
  \[\on{Loc}_G(\Temp(G,\theta_0))\cong \on{Loc}_{G(\cO)}(X^\theta(F))\]
of irreducible equivariant local systems.
    \item [(2)]
    There is a natural bijection
\[\on{Loc}_{G^{alg}}(\Temp(G,\theta_0))\cong \on{Loc}_{G^{alg}(\cO)}(X^\theta(F))\]
of irreducible equivariant local systems.
\end{itemize}
\end{thm}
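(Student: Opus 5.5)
Part (1) follows formally from Theorem \ref{T=X}, and (2) reduces to the analogue of Lemmas \ref{Y=X} and \ref{Y=T} for the covering groups.

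For (1), both sides are orbit sets of enhanced sets, as in the section on local systems. Using the slices of Lemmas \ref{Y=X} and \ref{Y=T}, I rewrite each side as a disjoint union over $\lambda\in X_*(T)^+$ with $\lambda=-w_0^{-1}\theta_0(\lambda)$ of $L_\lambda\backslash Y_{\lambda,0}^{L_\lambda,enh}$.

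On the tempiric side, Lemma \ref{Y=T} says every $G$-orbit in $\Temp(G,\theta_0)$ meets $Y_{\lambda,0}$ in exactly one $L_\lambda$-orbit. It also gives $Z_{L_\lambda}(y)=Z_G(\phi_{\lambda,y})$. Hence $G\backslash \Temp^{G,enh}\cong\bigsqcup_\lambda L_\lambda\backslash Y_{\lambda,0}^{L_\lambda,enh}$. Here one checks that pairs $(\phi,\chi)$ and $(g\phi,g\chi g^{-1})$ with both $\phi$ and $g\phi$ in the slice have $g\in L_\lambda$. That is exactly the injectivity argument of Lemma \ref{Y=T}.

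On the loop side, Lemma \ref{Y=X} gives the bijection on orbits and the isomorphism $\pi_0(Z_{L_\lambda}(y))\cong\pi_0(Z_{G(\cO)}(\gamma_{\lambda,y}))$. The enhanced version requires a compatibility: if $g\in G(\cO)$ carries $\gamma_{\lambda,y}$ to $\gamma_{\lambda,y'}$ with $y,y'\in Y_{\lambda,0}$, then the induced identification of component groups agrees with one coming from some $h\in L_\lambda$ carrying $y$ to $y'$. I would obtain this by picking $h\in L_\lambda$ with $hy\theta_0(h)^{-1}=y'$ (it exists by Lemma \ref{Y=X}). Then $h^{-1}g\in Z_{G(\cO)}(\gamma_{\lambda,y})$, and conjugation by an element of the stabilizer preserves isomorphism classes of irreducible representations of $\pi_0$. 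Composing the two descriptions gives (1), and the map is $(\phi_{\lambda,y},\chi)\mapsto(\gamma_{\lambda,y},\chi)$ via Theorem \ref{T=X}.

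For (2), the same scheme applies with $G^{alg}$ and $G^{alg}(\cO)$, but I need the analogues of Lemmas \ref{Y=X} and \ref{Y=T} for the preimage $L_\lambda^{alg}$ of $L_\lambda$ in $G^{alg}$. The orbit statements are unchanged, since these groups act through $G$ and $G(\cO)$ and $G^{alg}\to G$ is surjective. Stabilizers are the full preimages: $Z_{G^{alg}}(\phi)$ is the preimage of $Z_G(\phi)$, and similarly on the loop side. Writing $\pi=\pi_1(G)^{alg}$, this gives extensions
\[1\to\pi\to Z_{G^{alg}}(\phi_{\lambda,y})\to Z_G(\phi_{\lambda,y})\to1\]
and
\[1\to\pi\to Z_{G^{alg}(\cO)}(\gamma_{\lambda,y})\to Z_{G(\cO)}(\gamma_{\lambda,y})\to 1,\]
where the second uses that the kernel of $G^{alg}(\cO)\to G(\cO)$ is the pro-finite $\pi$, because $\cO$ is henselian with algebraically closed residue field. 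The inclusion $L^{alg}_\lambda\subset G^{alg}(\cO)$ gives a map of these extensions that is the identity on $\pi$. By Theorem \ref{T=X} the map on quotients is an isomorphism on $\pi_0$. Since both extensions have profinite kernel $\pi$, the induced map on $\pi_0$ is an isomorphism by the five lemma applied to the long exact sequences of $\pi_0$.

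The main obstacle is this last step: handling $\pi_0$ of pro-algebraic groups and identifying the kernel of $G^{alg}(\cO)\to G(\cO)$ with $\pi$ (rather than something larger). I would first try to reduce to each finite covering $G'\to G$ separately, where all the statements are algebraic, and then pass to the limit.
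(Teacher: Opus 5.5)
Your part (1) is fine, and it spells out in more detail what the paper gets directly from Theorem \ref{T=X}. The gap is in the last step of (2).

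Fix one finite layer $A$ of $\pi_1(G)^{alg}$ and an extension $1\to A\to E\to Q\to 1$. The exact sequence you actually have is
\[\pi_1(Q)\xrightarrow{\partial}A\to\pi_0(E)\to\pi_0(Q)\to 1,\]
and $A\to\pi_0(E)$ is in general not injective, since its kernel is $\partial(\pi_1(Q))$. Suppose the map is the identity on $A$ and an isomorphism on $\pi_0(Q)$ (the latter from Theorem \ref{T=X}). That gives surjectivity of $\pi_0(Z_{G^{alg}}(\phi_{\lambda,y}))\to\pi_0(Z_{G^{alg}(\cO)}(\gamma_{\lambda,y}))$. Injectivity needs more: every $a\in A$ lying in the identity component of $Z_{G^{alg}(\cO)}(\gamma_{\lambda,y})$ must already lie in the identity component of $Z_{G^{alg}}(\phi_{\lambda,y})$. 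Equivalently, the image of the connecting map for the loop-side extension must be contained in the image for the finite-dimensional one. That is information about $\pi_1$ of the stabilizers. Theorem \ref{T=X} is a statement about $\pi_0$ only, so the five lemma cannot be applied as you state it.

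The abstract data you use really are insufficient. Compare $\{\pm1\}\to\{1\}$ with the squaring map $\bGm\to\bGm$ containing it. The kernels agree and $\pi_0$ of the bases agree, yet $\pi_0$ of the total spaces are $\bZ/2$ and trivial. The step you flagged as the main obstacle is actually fine: $A$-torsors over $\Spec\cO$ are trivial, so the kernel of $G^{alg}(\cO)\to G(\cO)$ is $\pi_1(G)^{alg}$. The real difficulty is the connecting map.

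The missing ingredient is the stronger form of \cite[Theorem 13]{CY} that the paper uses. Evaluation $g\mapsto g(0)$ induces a surjective homomorphism $Z_{G^{alg}(\cO)}(\gamma_{\lambda,y})\to Z_{L^{alg}_\lambda}(y)$ with contractible fibers. Its restriction to the constant loops $Z_{G^{alg}}(\phi_{\lambda,y})=Z_{L^{alg}_\lambda}(y)$ is the identity. So the inclusion is a section of a map that is bijective on $\pi_0$, and therefore it is itself bijective on $\pi_0$.

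With this retraction, the extension by $\pi_1(G)^{alg}$, the henselian argument and the five lemma are all unnecessary. Alternatively, you can keep your route: the same statement at the level of $G$ shows that $Z_{G(\cO)}(\gamma_{\lambda,y})$ is homotopy equivalent to $Z_{L_\lambda}(y)$. That gives the surjectivity on $\pi_1$ your five-lemma step is missing.
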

\begin{proof}
Part (1) follows from Theorem \ref{T=X}.
For part (2), we need to show that 
the natural map $\pi_0(Z_{G^{alg}}(\phi_{\lambda,y}))\to \pi_0(Z_{G^{alg}(\cO)}(x_{\lambda,y}))$ is an isomorphism.
Let $L_\lambda^{alg}=G^{alg}\times_GL_\lambda$
be the pre-image of $L_\lambda$ along the covering $G^{alg}\to G$.
It follows from \cite[Theorem 13]{CY} that 
the evaluation map $G^{alg}(\cO)\to G^{alg},\ \gamma(t)\to\gamma(0)$
induces a surjective map $Z_{G^{alg}(\cO)}(\gamma_{\lambda,y})\to
Z_{L_\lambda^{alg}}(y)$ with contractible fibers, and thus an isomorphism $\pi_0(Z_{G^{alg}(\cO)}(\gamma_{\lambda,y}))\cong \pi_0(Z_{L_\lambda^{alg}}(y))$.
Thus we reduce to show that the composed map 
$\pi_0(Z_{G^{alg}}(\phi_{\lambda,y}))\to \pi_0(Z_{G^{alg}(\cO)}(\gamma_{\lambda,y}))\to \pi_0(Z_{L_\lambda^{alg}}(y))$ is an isomorphism.
This follows from Lemma \ref{Y=T} as it implies the composed map
$Z_{G^{alg}}(\phi_{\lambda,y})\to Z_{G^{alg}(\cO)}(\gamma_{\lambda,y})\to  Z_{L_\lambda^{alg}}(y)$
is the identity map.
\end{proof}

\section{Twistor $\mathbb P^1$ and Kottwitz sets}\label{Tw}
We recall the Matsuki duality between
$G(\cO)$-orbits on $X^\theta(F)$
and $G$-bundles on twistor $\mathbb P^1$ in \cite{CY}.

Let $\bbP^1$ be the complex projective line with coordinate $t$. Consider the conjugation $c:\bbP^1\to\bbP^1$ sending $t\to -(\bar t)^{-1}$.
Then the twistor-$\bbP^1$, denote by $\widetilde\bbP^1_{\bR}$, is the $\bR$-scheme 
that is obtained by descending $\bbP^1$ to $\bR$ via $\eta$. 
Let $\Bun_G(\bbP^1)$ be the stack of $G$-bundles on $\bbP^1$. Then the conjugations $\eta_0$
and $c$ on $G$ and $\bbP^1$ together give rise to
a conjugation $\eta:\Bun_G(\bbP^1)\to \Bun_G(\bbP^1)$,
and we denote by 
$\Bun_G(\bbP^1)_\eta$ the $\bbR$-stack obtained by descending $\Bun_G(\bbP^1)$ to $\bbR$ via $\eta$, referred to as the stack of $G$-bundles on the twistor-$\bbP^1$.
An $\bbR$-point of $\Bun_G(\bbP^1)_\eta$ consists of 
a $G$-bundle $\cE$ on $\bbP^1$ 
and an isomorphism $c:\cE\cong\eta(\cE)$ 
such that the composition is the identify map
\begin{equation}\label{c^2}
    \cE\stackrel{c}\to\eta(\cE)\stackrel{\eta(c)}\to 
    \eta^2(\cE)=\cE.
\end{equation}
Denote by $\Bun_G(\bbP^1)_\eta(\bbR)$ the set of isomorphism classes of $G$-bundles on the twistor $\bbP^1$
and 
\[\on{Loc}(\Bun_G(\bbP^1)_\eta(\bbR))=\{(\cE,c,\chi)|(\cE,c)\in\Bun_G(\bbP^1)_\eta(\bbR), \chi\in\on{Irr}(\pi_0(\Aut(\cE,c)))\}\]
where $\Aut(\cE,c)$ is the group of automorphism of $(\cE,c)$, referred as the isomorphism classes of local systems on $G$-bundles 
on the twistor $\bbP^1$.

\begin{lem}\label{review of Matsuki}
(1)
There is a natural bijection 
\[G(\cO)\backslash X^\theta(F)\cong \Bun_G(\bbP^1)_\eta(\bbR)\]
between $G(\cO)$-orbits on $X^\theta(F)$
and isomorphism classes of $G$-bundles on twistor-$\bbP^1$. 
Moreover, if the $G(\cO)$-orbits of 
$\gamma\in X^\theta(\cO)$ corresponds to 
$(\cE_\gamma,c_\gamma)\in \Bun_G(\bbP^1)_\eta(\bbR)$ under the bijection, there is an isomorphism 
\[\pi_0(Z_{G(\cO)}(\gamma))\cong\pi_0(\Aut(\cE_\gamma,c_\gamma))\]
between the component group of $G(\cO)$-stabilizer of $\gamma$ and the automorphism group of $(\cE_\gamma,c_\gamma)$.

(2) There is a natural  bijection between local systems
\[\on{Loc}_{G(\cO}(X^\theta(F))\cong\on{Loc}(\Bun_G(\bbP^1)_\eta(\bbR)).\]
\end{lem}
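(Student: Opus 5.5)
The plan is to construct the map from orbits to twistor bundles by gluing, then reduce bijectivity and the automorphism statement to the Matsuki duality for loop groups of \cite{CY}.

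\emph{Gluing.} Given $\gamma\in X^\theta(F)$, view it as a transition function on the formal punctured disc at $t=0$. Glue the trivial $G$-bundle on $\bP^1\setminus\{0\}$ to the trivial bundle on the formal disc at $0$ along $\gamma$; by Beauville--Laszlo this gives a $G$-bundle $\cE_\gamma$ on $\bP^1$. The bundle comes with a real structure $c_\gamma:\cE_\gamma\cong\eta(\cE_\gamma)$, defined as follows.
\begin{itemize}
\item The anti-holomorphic involution $c(t)=-\bar t^{-1}$ swaps $0$ and $\infty$. Combined with $\eta_0$, it maps the trivialization near $0$ to a trivialization near $\infty$.
\item Using the identification $\theta_0=\eta_0\eta_{c,0}$ together with $t\mapsto -t$, the relation $\gamma\theta(\gamma)=1$ is exactly the cocycle condition. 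It ensures that the composition \eqref{c^2} is the identity.
\end{itemize}
Replacing $\gamma$ by $g\gamma\theta(g)^{-1}$ with $g\in G(\cO)$ changes the trivialization at $0$, and compatibly at $\infty$, by $g$ and its $\eta$-transform. Hence $(\cE_\gamma,c_\gamma)$ depends only on the $G(\cO)$-orbit, and we get a well-defined map $G(\cO)\backslash X^\theta(F)\to\Bun_G(\bP^1)_\eta(\bR)$.

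\emph{Surjectivity and injectivity.} Conversely, for $(\cE,c)$ a twistor bundle, $\cE$ restricted to $\bP^1\setminus\{0\}$ is trivial, since $G$-bundles on $\bA^1$ are trivial. Trivialize $\cE$ there, and transport this trivialization by $c$ to the complement of $\infty$. Comparing the two trivializations on the formal punctured disc at $0$ yields an element $\gamma$. The condition $\eta(c)\circ c=\mathrm{id}$ forces $\gamma\in X^\theta(F)$.

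The key input is to express everything through the uniformization $\Bun_G(\bP^1)\simeq G[t^{-1}]\backslash G(F)/G(\cO)$. The real points are then identified with orbits of $G(\cO)$ on $X^\theta(F)$ after accounting for the $G[t^{-1}]$-ambiguity of the trivialization. Removing that ambiguity uses the Birkhoff-type decomposition and the compatibility of $\eta$ with $G[t^{-1}]\leftrightarrow G(\cO)$ proved in \cite{CY}. This is exactly the Matsuki duality established there, which I would invoke.

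\emph{Automorphisms.} $\Aut(\cE_\gamma,c_\gamma)$ is identified with a real form of the automorphism group of $\cE_\gamma$. I would compare it with $Z_{G(\cO)}(\gamma)$ as follows.
\begin{enumerate}
\item Using Lemma \ref{Y=X}, write $\gamma=t^\lambda y$.
\item Then $\cE_\gamma$ is the bundle of type $\lambda$, and $\Aut(\cE_\gamma)$ is $L_\lambda\ltimes U$ with $U$ unipotent.
\item The real structure acts on the Levi part by $g\mapsto y\theta_0(g)y^{-1}$-type twisting, so $\Aut(\cE_\gamma,c_\gamma)$ retracts onto the fixed group $Z_{L_\lambda}(y)$, possibly with a compact-form twist. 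The unipotent real part is contractible.
\end{enumerate}
Together with Lemma \ref{Y=X}, this gives $\pi_0(Z_{G(\cO)}(\gamma))\cong\pi_0(Z_{L_\lambda}(y))\cong\pi_0(\Aut(\cE_\gamma,c_\gamma))$.

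\emph{Main obstacle.} The hardest step is matching the real structure on $\Aut$ with the $\theta$-twisted stabilizer. The issue is that $\Aut(\cE,c)$ is a real group, whose fixed points are taken under an antiholomorphic involution, whereas $Z_{L_\lambda}(y)$ is a complex group. I would try to show the two have the same component group by a Cartan/polar decomposition argument: both deformation retract onto a common compact group, namely $Z_{L_\lambda}(y)\cap G_c$.

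\emph{Part (2).} Once part (1) is in hand, part (2) is formal. The bijection on orbits, together with the isomorphism of component groups (compatible with conjugation), identifies the enhanced sets and hence the sets $\on{Loc}$.
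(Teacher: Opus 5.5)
\textbf{Gap: the gluing map is not well defined.} You use the gluing step both to define the map and later to compute $\Aut(\cE_\gamma,c_\gamma)$, but it fails as stated. You do end up invoking \cite{CY} for the bijection, and that is all the paper does: its proof of (1) is the citation \cite[Theorems 4 and 22]{CY}. However, the map \cite{CY} proves bijective is not the one you describe.

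The real structure on $\Bun_G(\bP^1)$ comes from the antiholomorphic involution $\eta(\gamma)(t)=\eta_0(\gamma(-\bar t^{-1}))$, which exchanges $0$ and $\infty$. For a loop on $\bC^\times$, the condition $\eta(c)\circ c=\mathrm{id}$ reads $\gamma\,\eta(\gamma)=1$. By contrast, $\gamma\theta(\gamma)=1$ is a holomorphic condition at $0$. Write $\eta=\theta\circ\eta_c$ with $\eta_c(\gamma)(t)=\eta_{c,0}(\gamma(\bar t^{-1}))$. Then the two conditions agree only on $\eta_c$-fixed (compact) loops. For a formal $\gamma\in X^\theta(F)$, the loop $\eta(\gamma)$ lives at $\infty$, so nothing is actually glued.

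Your invariance claim also fails. For $g\in G(\cO)$ we also have $\theta(g)\in G(\cO)$, so both factors act at $0$, not at $0$ and $\infty$. In your uniformization the glued bundle is the class of $G[t^{-1}]\gamma G(\cO)$. Since $g\gamma\theta(g)^{-1}G(\cO)=g\gamma G(\cO)$, these cosets run over the whole $G(\cO)$-orbit of $\gamma G(\cO)$ in $\Gr$, which meets $G[t^{-1}]$-orbits of every type $\mu\le\lambda$.

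Here is a concrete instance for $\SL_2$. The loop $t^{\check\alpha}w_0\in X^\theta(F)$ glues to $\cO(1)\oplus\cO(-1)$. Now take $h=\left(\begin{smallmatrix}t^{-1}&0\\1&t\end{smallmatrix}\right)$. It lies in $G(\cO)t^{\check\alpha}G(\cO)$, and it factors as $\left(\begin{smallmatrix}t^{-1}&-1\\1&0\end{smallmatrix}\right)\left(\begin{smallmatrix}1&t\\0&1\end{smallmatrix}\right)\in G[t^{-1}]G[t]$. So the $G(\cO)$-orbit of $t^{\check\alpha}w_0$ contains loops in $hG(\cO)$, and these glue to the trivial bundle.

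The converse direction has the same defect: the loop you extract from $(\cE,c)$ satisfies $\gamma\,\eta(\gamma)=1$, not $\gamma\theta(\gamma)=1$.

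\textbf{Missing idea: compact representatives.} Matsuki duality is, in effect, ``glue along a \emph{compact} representative''. By the compact-representative results of \cite{CY} (see \cite[Theorem 17]{CY}), every orbit contains some $\gamma=t^\lambda y$ as in Lemma \ref{Y=X} with moreover $y\in G_c$. Such a $\gamma$ is $\eta_c$-fixed, so $\eta(\gamma)=\theta(\gamma)$, and $\gamma\theta(\gamma)=1$ genuinely becomes the reality cocycle. The substance of \cite{CY} is that the resulting $(\cE_\gamma,c_\gamma)$ is independent of this choice and that the assignment is bijective.

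\textbf{Your component-group argument.} With that map in place, your argument is a genuinely different route from the paper's bare citation of \cite[Theorem 22]{CY}, and it works. Lemma \ref{Y=X} gives $\pi_0(Z_{G(\cO)}(\gamma))\cong\pi_0(Z_{L_\lambda}(y))$. On the Levi factor $L_\lambda$ of $\Aut(\cE_\lambda)$, the real structure is, up to conventions, $\Ad_y\circ\eta_0=\Ad_y\circ\theta_0\circ\eta_{c,0}$. For $y\in G_c$ this is an involution of $L_\lambda$, because $y\eta_0(y)=y\theta_0(y)=\lambda(-1)$, and it commutes with $\eta_{c,0}$. Its associated holomorphic involution is $\Ad_y\circ\theta_0$, whose fixed points in $L_\lambda$ are exactly $Z_{L_\lambda}(y)$. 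Hence the real Levi and $Z_{L_\lambda}(y)$ share the maximal compact subgroup $Z_{L_\lambda}(y)\cap G_c$, and the real unipotent radical is contractible.

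For non-compact $y$ these steps break down: $\Ad_y\circ\eta_0$ need not be an involution, and $Z_{L_\lambda}(y)\cap G_c$ need not be maximal compact. So compactness of $y$ is essential, not a ``possible twist''. What your route buys is an explicit description of the component-group isomorphism. Part (2) is formal in both versions.
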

\begin{proof}
 Part (1) is \cite[Theorem 4 and Theorem 22]{CY}. Part (2) is an immediately corollary of (1).
\end{proof}

As a corollary of Theorem \ref{T=X} and Theorem \ref{T=X enhanced}, we obtain 
\begin{thm}\label{twistor=T}
(1)    There is a natural bijection
    \[\Bun_G(\bbP^1)_\eta(\bbR)\cong G\backslash\Temp(G,\eta_0)\]
    between isomorphism classes of $G$-bundles on the twistor $\bbP^1$ and equivalence classes of tempiric parameters.

(2)  There is a natural bijection between local systems
    \[\on{Loc}(\Bun_G(\bbP^1)_\eta(\bbR))\cong \on{Loc}_{G}(\Temp(G,\eta_0)).\]
   
\end{thm}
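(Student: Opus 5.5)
The plan is to obtain both bijections by composing the bijections already established, using the loop symmetric space as the intermediate object. (The statement writes $\Temp(G,\eta_0)$; this should be read as $\Temp(G,\theta_0)$, the set of Section \ref{Tempiric parameters}, since tempiric parameters are defined through the $L$-group $G\rtimes_{\theta_0}\Gamma$.)

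For part (1), Lemma \ref{review of Matsuki}(1) gives a bijection
\[\Bun_G(\bbP^1)_\eta(\bbR)\cong G(\cO)\backslash X^\theta(F).\]
Theorem \ref{T=X} gives a bijection
\[G\backslash\Temp(G,\theta_0)\cong G(\cO)\backslash X^\theta(F),\]
namely $\phi_{\lambda,y}\mapsto \gamma_{\lambda,y}=t^\lambda y$. Composing the first bijection with the inverse of the second yields the bijection in (1). Naturality is inherited from the two ingredients. Concretely, the resulting map sends the twistor bundle $(\cE_{\gamma_{\lambda,y}},c_{\gamma_{\lambda,y}})$ to the class of $\phi_{\lambda,y}$, where $\lambda\in X_*(T)^+$ satisfies $\lambda=-w_0^{-1}\theta_0(\lambda)$ and $y\in Y_{\lambda,0}$.

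For part (2), the key step is to match the component groups, since an element of either set of local systems is an orbit together with an irreducible representation of the corresponding component group. Fix corresponding objects $\phi=\phi_{\lambda,y}$, $\gamma=\gamma_{\lambda,y}$ and $(\cE_\gamma,c_\gamma)$. Theorem \ref{T=X} gives
\[\pi_0(Z_G(\phi))\cong\pi_0(Z_{G(\cO)}(\gamma)),\]
and Lemma \ref{review of Matsuki}(1) gives
\[\pi_0(Z_{G(\cO)}(\gamma))\cong\pi_0(\Aut(\cE_\gamma,c_\gamma)).\]
Composing them identifies $\pi_0(Z_G(\phi))$ with $\pi_0(\Aut(\cE_\gamma,c_\gamma))$, and pulling back irreducible representations along this isomorphism gives the map on enhanced data.

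Equivalently, the map in (2) is the composite of Theorem \ref{T=X enhanced}(1) with the inverse of Lemma \ref{review of Matsuki}(2):
\[\on{Loc}(\Bun_G(\bbP^1)_\eta(\bbR))\cong\on{Loc}_{G(\cO)}(X^\theta(F))\cong\on{Loc}_G(\Temp(G,\theta_0)).\]

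The one point needing care is well-definedness on orbits. Changing the representative within an orbit (for example replacing $\gamma$ by $g\cdot\gamma$ with $g\in G(\cO)$, or $\phi$ by a $G$-conjugate) changes the component-group isomorphisms only by conjugation. Conjugation does not change isomorphism classes of irreducible representations, so the induced map on pairs taken up to equivalence is independent of choices. Once this is checked, bijectivity of the map in (2) follows from bijectivity on orbits in (1) together with the isomorphism of component groups at each orbit.
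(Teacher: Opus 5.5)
Your proposal is correct and takes the same approach as the paper. The paper obtains this theorem as an immediate corollary by composing the Matsuki-duality bijection of Lemma \ref{review of Matsuki} with Theorem \ref{T=X} for orbits, and with Theorem \ref{T=X enhanced}(1) (equivalently, the component-group isomorphisms) for local systems; your reading of $\Temp(G,\eta_0)$ as $\Temp(G,\theta_0)$ is also the intended one, consistent with Theorem \ref{BG=T}.
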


We make connection with the Kottwitz set.
Denote by 
$B(G,\eta_0)$ 
the Kottwitz set
associated to the real form $\eta_0=\theta_0\circ\eta_{c,0}:G\to G$, see \cite{K}.
By definition, it is the set of $G$-orbits 
$B(G,\eta_0)=G\backslash Z^1_{alg}(\rW_\bbR,G)$
where $Z^1_{alg}(\rW_\bR,G)$ is the set of cocycles $z:\rW_\bR\to G$, $z(ww')=z(w)w(z(w'))$, such that $z|_{\bC^\times}=\lambda(\bC^\times):\bC^\times\to G$
comes from an algebraic cocharacter $\lambda:\mathbb G_m\to G$, and the $G$-action is given by $\eta_0$-conjugation action. 
Concretely, the assignment sending $z
\to (\lambda,y=z(j))$ defines a $G$-equivariant bijection between $Z^1_{alg}(\rW_\bR,G)$ and the $G$-set 
consisting of pairs $(\lambda,y)$ where 
\begin{itemize}
	\item [(i)] $\lambda:\bGm\rightarrow G$ is a cocharacter over $\bC$,
	\item [(ii)] $y\in G$ such that $y\eta_0(y)=\lambda(-1)$,
	\item [(iii)] $y\eta_0(\lambda(z)) y^{-1}=\lambda(\bar z)$, for $z\in\bC^\times$,
	\item [(iv)] $g\cdot (\lambda,y)=(g\lambda g^{-1},gy\eta_0(g)^{-1})$ for $g\in G$.
    \end{itemize}
It is known that there is a natural bijection
\[B(G,\eta_0)\cong\Bun_G(\bbP^1)_{\eta},\]
see for example \cite[Proposition 5.7]{CY}. 
Thus 
Combining with Theorem \ref{twistor=T}, we obtain
\begin{thm}\label{BG=T}
There is a natural
bijection
\[B(G,\eta_0)\cong G\backslash\Temp(G,\theta_0)\]
between the Kottwitz set associated to the real form $\eta_0$
and equivalence classes of tempiric parameters.
\end{thm}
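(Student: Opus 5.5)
The bijection will be obtained as a composite of results already in the paper:
\[B(G,\eta_0)\cong\Bun_G(\bbP^1)_\eta(\bbR)\cong G(\cO)\backslash X^\theta(F)\cong G\backslash\Temp(G,\theta_0).\]
The first map is the identification recalled from \cite[Proposition 5.7]{CY}. The second is Lemma \ref{review of Matsuki}(1), and the third is the inverse of Theorem \ref{T=X}. Together these give Theorem \ref{twistor=T}(1), read with $\Temp(G,\theta_0)$, which is the set actually defined. So the formal proof is a two-line concatenation. The content worth adding is a check that the composite is natural, i.e.\ that it is the expected direct map on pairs $(\lambda,y)$.

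For the direct description, I would compare conditions (i)--(iii) for cocycles with conditions (i)--(iii) for tempiric parameters. Write $\eta_0=\theta_0\circ\eta_{c,0}$. Choose the representatives so that $\lambda\in X_*(T)^+$ and $\eta_{c,0}$ fixes $T$ pointwise up to inversion, so that $\eta_{c,0}(\lambda(z))=\lambda(\bar z^{-1})$. Then the cocycle condition $y\eta_0(\lambda(z))y^{-1}=\lambda(\bar z)$ becomes
\[y\theta_0(\lambda(z^{-1}))y^{-1}=\lambda(z),\]
which is exactly the tempiric condition (iii). The condition $y\eta_0(y)=\lambda(-1)$ should likewise match $y'\theta_0(y')=\lambda(-1)$ after a suitable twist. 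Here I would use representatives $y\in L_\lambda w_0^{-1}$ as in $Y_{\lambda,0}$, together with the fact that on $L_\lambda$ the two twisted conjugation actions ($\eta_0$ versus $\theta_0$) can be intertwined by the compact form.

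Concretely, I would try to show that the $L_\lambda$-orbits on $Y_{\lambda,0}$ (defined via $\theta_0$) are in bijection with the analogous $\eta_0$-twisted orbits. Each such orbit has a representative $y$ in the compact group $G_c$, and for $y\in G_c$ we have $\eta_0(y)=\theta_0(y)$. This should follow from a Cartan-type argument: the stabilizer $Z_{L_\lambda}(y)$ is reductive, and the fixed points of a compact real form meet every orbit.

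The main obstacle is this compact-representative step. Making it rigorous requires knowing that each $L_\lambda$-orbit in $Y_{\lambda,0}$ meets $G_c$, and that two compact representatives are conjugate under $L_\lambda$ only if they are conjugate under $L_\lambda\cap G_c$. If this is delicate, I would fall back on the composite of the established bijections, which already proves the theorem as stated.
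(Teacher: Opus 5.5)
Your proposal is correct and is essentially the paper's proof: the paper composes the bijection $B(G,\eta_0)\cong\Bun_G(\bbP^1)_\eta(\bbR)$ of \cite[Proposition 5.7]{CY} with Theorem \ref{twistor=T}, which is itself Lemma \ref{review of Matsuki} combined with Theorem \ref{T=X}, exactly your chain. Your supplementary direct description also matches the paper's. There, the compact-representative step you flag as the obstacle is not reproved (your ``Cartan-type'' heuristic would not suffice on its own) but imported from \cite[Theorem 17]{CY}, after which applying $\eta_{c,0}$ to conditions (ii)--(iii) turns the cocycle into the tempiric pair, as you indicate.
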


We briefly describe the construction of the bijection in Theorem \ref{BG=T}, which serves as the starting point of this note.
The main observation is that, in \cite[Theorem 17]{CY},
we show that every cocycle $(\lambda,y)$
satisfying
$(i)-(iii)$ above admits a compact representative, that is,
$g\cdot(\lambda,y)=(\lambda',y')$ where $y'\in G_c=G^{\eta_{c,0}}$. 
Note that $\theta_0(y')=\eta_0\eta_{c,0}(y')=\eta_0(y')$. Thus by applying the compact conjugation $\eta_{c,0}$ to 
equations $(ii)-(iii)$ we get 
\[y'\theta_0(y')=\lambda'(-1),\]
\[y'\theta_0(\lambda'(z))(y')^{-1}=\lambda'(z^{-1}),\]
and  the pair $(\lambda',y')$ defines the corresponding temperic parameter 
$\phi_{\lambda',y'}$.

%%%%%%%%%5
\quash{

For any such pair $(\lambda,y)$ we write $z_{\lambda,y}\in Z^1_{alg}(\rW_\bR,G)$ the corresponding cocycle and $b_{\lambda,y}\in B(G,\eta_0)$
the corresponding orbit.

We recall the construction of a bijection between 
$B(G,\eta_0)$ and $G(\cO)\backslash X^\theta(F)$ in \cite{CY}.

\begin{defe}
    For any $\lambda\in X_*(T)^+$ satisfying $\lambda=-w_0^{-1}\theta_0(\lambda)$, we let
    \[C_{\lambda,0}=\{y_c\in L_{\lambda,c} w_0^{-1}|y\eta_0(y)=\lambda(-1)\}.\]
    The compact Levi subgroup  $L_{\lambda,c}=L_\lambda\cap G_c=Z_{G_c}(\lambda)$ acts on $C_{\lambda,0}$ by $g\cdot y=gy\eta_0(y)^{-1}$
    and we denote by $L_{\lambda,c}\backslash C_{\lambda,0}$ the set of orbits.
\end{defe}

Note that $\theta_0=\eta_0$ on $G_c$ and $w_0\in G_c$
and hence 
\[C_{\lambda,0}=Y_{\lambda,0}\cap G_c=\{y_c\in L_{\lambda,c} w_0^{-1}|y_c\theta_0(y_c)=\lambda(-1)\}.\]
On the other hand, we have $\eta_{c,0}(\lambda(z))=\lambda(\bar z^{-1})$, and hence for $y_c=g_cw_0^{-1}\in C_{\lambda,0}$, $g_c\in L_{\lambda,c}$, we have 
\[y_c\eta_{0}(\lambda(z))y_c^{-1}=g_cw_0^{-1}\theta_0\circ\eta_{c,0}(\lambda(z))w_0g_c^{-1}=g_c(w_0^{-1}\theta_0(\lambda)(\bar z^{-1}))g_c^{-1}=g_c\lambda(\bar z)g_c^{-1}=\lambda(\bar z).\]
Thus the pair $(\lambda,y_c)$ satisfies the equation $(i)-(iii)$ above and we get a $L_{\lambda,c}$-equivariant embedding
\[C_{\lambda,0}\to Z^1_{alg}(\rW_\bR,G),\ y_c\to z_{\lambda,y_c}.\]

\begin{lem}\cite[Theorem 13, Theorem 17, Proposition 57]{CY}
\label{Matsuki}

    (1)  The natural inclusion $C_{\lambda,0}=Y_{\lambda,0}\cap G_c\to Y_{\lambda,0}$ induces a bijection 
    \[L_{\lambda,c}\backslash C_{\lambda,0}\cong L_{\lambda}\backslash Y_{\lambda,0}.\]
    Moreover, the natural inclusion $Z_{L_{\lambda,c}}(y_c)\to Z_{L_{\lambda}}(y_c)$ induces an isomorphism
    on component groups
    \[\pi_0(Z_{L_{\lambda,c}}(y_c
    ))\cong \pi_{0}(Z_{L_{\lambda}}(y_c)).\]

    (2) 
    The natural inclusion $C_{\lambda,0}\to Z^1_{alg}(\rW_\bR,G)$ induces a bijection 
    \[\bigsqcup_{\lambda=-w_0^{-1}\theta_0(\lambda)\in X_*(T)^+}
    L_{\lambda,c}\backslash C_{\lambda,0}\cong G\backslash Z^1_{alg}(\rW_\bR,G)\cong B(G,\eta_0)\]
    Moreover, the natural inclusion $Z_{L_{\lambda,c}}(y_c)\to Z_{G}(z_{\lambda,y_c})$ induces an isomorphism
    on component groups
    \[\pi_0(Z_{L_{\lambda,c}}(y_c))\cong \pi_{0}(Z_{G}(z_{\lambda,y_c})).\]

    (3) 
    The assignments  $b_{\lambda,y_c}\leftarrow y_c\to \gamma_{\lambda,y_c}$ 
    induces bijection
    \[B(G,\eta_0)\cong \bigsqcup_{\lambda=-w_0^{-1}\theta_0(\lambda)\in X_*(T)^+}
    L_{\lambda,c}\backslash C_{\lambda,0}\cong G(\cO)\backslash X^\theta(F)\]  
    Moreover, there are isomorphisms
    on component groups
    \[\pi_0(Z_{G}(z_{\lambda,y_c}))\cong\pi_{0}(Z_{L_{\lambda,c}}(y_c))\cong \pi_{0}(Z_{G(\cO)}(\gamma_{\lambda,y_c})).\]
\end{lem}
}
%%%%%%%%

\section{Minimal $K$-types}
Let $\check G$ be the complex dual group. 
We fix a pinning $(\check G,\check B,\check T,\check X)$ and let $\check\theta_0\in\on{Aut}(\check G,\check B,\check T,\check X)$ be the pinned involution 
corresponding to $\theta_0$ under the 
natural isomorphism $\on{Aut}(\check G,\check B,\check T,\check X)\cong \on{Aut}(G,B,T,X)$.
Let $\on{Chev}_{\check G}\in \on{Aut}(\check G,\check B,\check T,\check X)$ be the Chevalley involution such that $\on{Chev}_{\check G}(t)=w_0(t^{-1})$ for $t\in\check T$.
The composition 
\[
\check\theta=\on{Chev}_{\check G}\circ\check\theta_0
\]
is a pinned involution of $\check G$, to be called the
dual of $\theta_0$.
Let $Z(\check G)^{\check\theta}$ and $Z(\check G)_{tor}$
be the subgroup of $\check\theta$-fixed points and torsion elements in $Z(\check G)$.
We set $Z(\check G)^{\check\theta}_{tor}=Z(G)^{\check\theta}\cap Z(G)_{tor}$.
Note that $2\check\rho(-1)\in Z(\check G)^{\check\theta}_{tor}$, where $2\check\rho$ is the sum of 
positive coroots.
Recall the notions of pure involutions and strong involutions in \cite{ABV}:

\begin{defe}
    A strong involution in the inner class of $\check\theta$
    is an element $(c,\check\theta)\in \check G\rtimes\{1,\check\theta\}$ satisfying
    $z=c\check\theta(c)\in Z(\check G)^{\check\theta}_{tor}$.
    Denote $\check\theta_c=\Ad_c\circ\check\theta$. 
    The central element $z$ is called the central invariant of $(c,\check\theta)$. 
    Two strong involutions $(c,\check\theta),(c',\check\theta)$
    are called equivalent if 
    $c=gc'\check\theta(g)^{-1}$ for some $g\in\check G$.

    Write $I$ for the set of equivalent classes of strong involutions in the inner class of $\check\theta$.
    For any $z\in Z(\check G)^{\check\theta}_{tor}$, we write  $I_z\subset I$ for the subset of equivalent classes of strong involutions with central $z$.
    In particular, we call elements of $I_{2\rho(-1)}$
    the pure involutions.
\end{defe}

We fix a set of  representatives $\check\theta_i, i\in I$ and let $\check K_i=\check G^{\check\theta_i}$
be the associated symmetric subgroups. 
Vogan's theory of minimal $K$-types \cite{V} establishes remarkable bijections between the following sets:

\begin{itemize}
	\item [(i)] The 
    set $\on{Loc}_{G^{alg}}(\Temp(G,\theta_0))$
    of equivalence classes of
    $G^{alg}$-enhanced tempiric parameters, see Definition \ref{enh},
	\item [(ii)] The union, over $i\in I$, of  
   the tempered, irreducible
  $(\check\fg,\check K_i)$-modules with real infinitesimal characters,
	\item [(iii)] The union, over $i\in I$, of the 
    irreducible representations $\on{Irr}(\check K_i)$
    of $\check K_i$.
    \end{itemize}
The map $(\phi,\tilde\chi)\rightarrow\pi(\phi,\tilde\chi)$
from
$(i)$ to $(ii)$ is the restriction of  the local Langlands correspondence. Then map 
$\pi(\phi,\tilde\chi)\rightarrow \on{LKT}(\pi(\phi,\tilde\chi))$ from $(ii)$ to $(iii)$
is given by its unique minimal $K$-type.

We review how to determine the strong involution $\check{K}_i$
for a given $G^{alg}$-enhanced tempiric parameter $(\phi,\tilde\chi)$ following \cite{AA}.
By \cite[Lemma 3.2]{AA}, by $G$-conjugation, we can assume
$\phi(\bC^\times)\subset T$ and $\phi(j)$ normalizes $T$.
Moreover, we can choose the conjugation so that $\phi$
is in the \emph{standard form}\cite[Definition 4.1]{AA}.
The conjugation by $\phi(j)$ defines an involution $\tau$ on $T$. 
Let $T^{alg}\subset G^{alg}$ be the preimage of $T\subset G$
and $T^{alg,\tau}$ the preimage of $T^\tau$.
Denote by $\check\tau=\check\tau(\phi)$ the composition of the inversion with the dual involution of $\tau$ on $\check T$.

Recall that the \emph{KGB space} for the inner class of $\check\theta$ is
\begin{equation}
    \mathcal{X}=\{(c,\check\theta)\in\on{Norm}_{\check G\rtimes\{1,\check\theta\}\backslash\check G}(\check T)\mid c\check\theta(c)\in Z(\check G)\}/\check T,
    \end{equation}
where $\check T$ acts by conjugation.
For $\check\tau$ on $\check T$, denote
$\mathcal{X}_{\check\tau}=\{\xi=(c,\check\theta)\in\mathcal{X}\mid\Ad_\xi|_{\check T}=\check\tau\}$.
For $z\in Z(\check{G})$, denote
$\mathcal{X}(z)=\{(c,\check\theta)\in\mathcal{X}\mid c\check\theta(c)=z\}$.
By \cite[Proposition 4.12]{AA}, we have a bijection
$\on{Irr}(\pi_0(T^{alg,\tau}))\simeq\mathcal{X}_{\check\tau}$.
As mentioned in \cite[\S4.4]{AA}, 
there are natural surjections
$p_\phi^0:\pi_0(T^\tau)\rightarrow \pi_0(Z_G(\phi)),\ 
p_\phi:\pi_0(T^{alg,\tau})\rightarrow \pi_0(Z_{G^{alg}}(\phi))$,
inducing embeddings of the sets of characters.
Together we obtain an embedding
\begin{equation}\label{eq:KGB fiber}
    \mathcal{E}_\phi:\on{Irr}(\pi_0(Z_{G^{alg}}(\phi)))\hookrightarrow\mathcal{X}_{\check\tau}.
\end{equation}
A representative of $\mathcal{E}_\phi(\tilde\chi)$
gives the corresponding (equivalence class of) 
strong involution for $(\phi,\tilde\chi)$.

Recall the natural inclusion 
$\on{Loc}_{G}(\Temp(G,\theta_0))\hookrightarrow \on{Loc}_{G^{alg}}(\Temp(G,\theta_0))$
in~\eqref{inclusion}.
   
\begin{lem}\label{pure}
   The bijection between $(i)$ and $(iii)$
   restricts to a bijection 
   \[\on{Loc}_{G}(\Temp(G,\theta_0))\cong\bigsqcup_{i\in I_{2\check\rho(-1)}}\on{Irr}(\check K_i), \ \ (\phi,\chi)\to\on{LKT}(\pi(\phi,\chi))\]
 between the equivalent classes of $G$-enhanced tempiric parameters and the union, over the set $I_{2\check\rho(-1)}$ of
 pure involutions, 
 of the irreducible representations of $\check K_i$.
\end{lem}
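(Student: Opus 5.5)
Since Vogan's bijection between (i) and (iii) is already available, the lemma reduces to a claim about images. Via the embedding \eqref{inclusion}, the subset $\on{Loc}_{G}(\Temp(G,\theta_0))$ of $\on{Loc}_{G^{alg}}(\Temp(G,\theta_0))$ should map exactly onto the pure part $\bigsqcup_{i\in I_{2\check\rho(-1)}}\on{Irr}(\check K_i)$. The map $(\phi,\tilde\chi)\mapsto \on{LKT}(\pi(\phi,\tilde\chi))$ lands in $\on{Irr}(\check K_i)$, where $i$ is the strong involution attached to $\mathcal{E}_\phi(\tilde\chi)\in\mathcal{X}_{\check\tau}$. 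So it suffices to prove the following: for $\phi$ in standard form, a character $\tilde\chi$ of $\pi_0(Z_{G^{alg}}(\phi))$ factors through $\pi_0(Z_G(\phi))$ if and only if $\mathcal{E}_\phi(\tilde\chi)\in\mathcal{X}(2\check\rho(-1))$. Injectivity is automatic, since we are restricting a bijection.

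First I reduce to the torus. Put $\pi_1(G)^{alg}=\ker(T^{alg}\to T)$, which lies in $T^{alg,\tau}$. The surjections $p_\phi$ and $p_\phi^0$ are compatible with $\pi_0(T^{alg,\tau})\to\pi_0(T^\tau)$. Hence $\tilde\chi$ descends to $\pi_0(Z_G(\phi))$ if and only if its pullback to $\pi_0(T^{alg,\tau})$ descends to $\pi_0(T^\tau)$, that is, if and only if it is trivial on the image of $\pi_1(G)^{alg}$. The forward direction is clear. For the converse I would check that $\ker(\pi_0(Z_{G^{alg}}(\phi))\to\pi_0(Z_G(\phi)))$ is exactly the image of $\pi_1(G)^{alg}$; this holds because $Z_{G^{alg}}(\phi)$ is the preimage of $Z_G(\phi)$, as the $G^{alg}$-action factors through $G$.

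The main step is then to identify, under the bijection $\on{Irr}(\pi_0(T^{alg,\tau}))\simeq\mathcal{X}_{\check\tau}$ of \cite[Proposition 4.12]{AA}, how the restriction of a character to $\pi_1(G)^{alg}$ relates to the central invariant $c\check\theta(c)\in Z(\check G)$. Characters of $\pi_1(G)^{alg}$, the profinite completion of $X_*(T)/\bZ\Phi^\vee$, are the torsion points of $Z(\check G)$. I expect the construction in \cite{AA} to send $\tilde\chi$ to $\xi=(c,\check\theta)$ so that $\tilde\chi|_{\pi_1(G)^{alg}}$ corresponds to $c\check\theta(c)$ up to a fixed normalization. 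That normalization is what shifts the trivial character to $2\check\rho(-1)$ rather than to $1$, coming from the $\rho$-shift built into the standard form, namely $y\theta_0(y)=\lambda(-1)$ and the choice of lift of $\phi(j)$. So $\tilde\chi$ is trivial on $\pi_1(G)^{alg}$ if and only if $\xi\in\mathcal{X}(2\check\rho(-1))$.

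This identification is the expected obstacle. I would first try to extract it directly from the proof of \cite[Proposition 4.12]{AA}, tracking the central character through the explicit formula that relates the $\check T$-torsor $\mathcal{X}_{\check\tau}$ to characters of $T^{alg,\tau}$. If that fails, I would use equivariance: both sides are torsors, under $Z(\check G)_{tor}$ acting on central invariants and under the characters of $\pi_1(G)^{alg}$ acting by twisting. It then suffices to check one base point, the quasisplit pure form corresponding to the trivial $\tilde\chi$ on a principal-series parameter, where the central invariant is known to be $2\check\rho(-1)$. Surjectivity onto all of $\on{Irr}(\check K_i)$ for $i\in I_{2\check\rho(-1)}$ then follows, because every preimage of such a $K$-type under Vogan's bijection has pure strong involution and hence lies in $\on{Loc}_G$.
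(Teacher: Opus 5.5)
Your proposal is correct, and its skeleton agrees with the paper's. You pull $\tilde\chi$ back along $p_\phi$ to $\pi_0(T^{alg,\tau})$. You then use that, under $\Irr(\pi_0(T^{alg,\tau}))\simeq\mathcal{X}_{\check\tau}$, the characters descending to $\pi_0(T^\tau)$ are exactly those landing in $\mathcal{X}_{\check\tau}(2\check\rho(-1))$, and you finish with a chase in the square formed by $p_\phi,p^0_\phi,q_\tau,q_\phi$.

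The step you single out as the obstacle is not proved in the paper either. The paper reads it off from the construction in \cite[Lemma 4.10, \S4.3]{AA} and quotes the forward direction from \cite[Proposition 4.17]{AA}. Only the special case "trivial on $\pi_1(G)^{alg}$ iff central invariant $2\check\rho(-1)$" is needed, so your first plan suffices. The torsor fallback is not an independent route, because the equivariance of AA's bijection it relies on is itself part of that construction. It is also imprecise as stated: when nonempty, $\mathcal{X}_{\check\tau}$ is a torsor under $\{t\in\check T\mid t\check\tau(t)\in Z(\check G)\}/\{s\check\tau(s)^{-1}\mid s\in\check T\}$, not under $Z(\check G)_{tor}$.

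The genuine difference is the descent step. The paper proves $q_\tau(\ker p_\phi)=\ker p^0_\phi$ by matching explicit generators on the two sides:
\begin{itemize}
\item the classes $\overline{m}_\alpha$ for $\tau$-real, $\phi$-singular roots, from \cite[\S4.4]{AA};
\item the classes of $\check\alpha(-1)$ for $\tau$-real roots of $L_\lambda$, from \cite[Lemma 12.10]{ABV}, which requires $\phi$ to be in standard form.
\end{itemize}
You instead note that $Z_{G^{alg}}(\phi)$ and $T^{alg,\tau}$ are the full preimages of $Z_G(\phi)$ and $T^\tau$. Hence $\ker q_\phi$ and $\ker q_\tau$ are both the image of $\pi_1(G)^{alg}$, so $\ker q_\phi=p_\phi(\ker q_\tau)$ and $\tilde\chi$ descends directly. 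The one detail to supply is that identity components surject at each finite level of the covering, after which you pass to the limit.

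Your route is more elementary and avoids \cite[Lemma 12.10]{ABV} and the real-singular-root description entirely. It also shows that the paper's kernel identity is a formal consequence of the commutative square. The paper's route gives explicit generators for $\ker p_\phi$ and $\ker p^0_\phi$, which is extra information the lemma does not need.
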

\begin{proof}
    For a $G$-enhanced tempiric parameters $(\phi,\chi)$,
    we lift $\chi$ to $\tilde\chi$ via the surjective map
    $\pi_0(Z_{G^{alg}}(\phi))\rightarrow\pi_0(Z_G(\phi))$.
    By \cite[Proposition 4.17]{AA},
    $\mathcal{E}_\phi(\tilde\chi)\in\mathcal{X}_{2\check\rho(-1)}$,
    so that the corresponding strong involution belongs to $I_{2\check\rho(-1)}$.

    It remains to show that the restriction of Vogan's bijection
    is surjective.
    It suffices to show that for a 
    $G^{alg}$-enhanced tempiric parameter $(\phi,\tilde\chi)$,
    if $\mathcal{E}_\phi(\tilde\chi)\in\mathcal{X}_{2\check\rho(-1)}$,
    then $\tilde\chi$ factors through 
    $\pi_0(Z_{G^{alg}}(\phi))\rightarrow\pi_0(Z_G(\phi))$.
    Consider commutative diagram
    \[\begin{tikzcd}
    {\pi_0(T^{alg,\tau})} \arrow[r, "p_\phi"] \arrow[d, "q_\tau"'] & \pi_0(Z_{G^{alg}}(\phi)) \arrow[d, "q_\phi"'] \\
    \pi_0(T^\tau) \arrow[r, "p_\phi^0"]                            & \pi_0(Z_G(\phi))                             
    \end{tikzcd}\]
   where all morphisms are natural surjections.
   From the construction of 
   $\on{Irr}(\pi_0(T^{alg,\tau}))\simeq\mathcal{X}_{\check\tau}$
   in \cite[Lemma 4.10, \S4.3]{AA},
   we have
   $\on{Irr}(\pi_0(T^\tau))\simeq\mathcal{X}_{\check\tau}(2\check\rho(-1))$.
   The problem is reduced to showing that if $\tilde\chi$
   factors through $\pi_0(Z_{G^{alg}}(\phi))$
   and $\pi_0(T^\tau)$,
   then it factors through $\pi_0(Z_G(\phi))$.
   To this end, it suffices to show that
   \begin{equation}\label{eq:ker comparison}
   q_\tau(\ker(p_\phi))=\ker(p_\phi^0).
   \end{equation}

   As explained in \cite[\S4.4]{AA}, the kernel of $p_\phi$
   is generated by the following elements.
   Assume $\phi|_{\bC^\times}=\lambda\in X_*(T)$.
   Consider $\tau$-real and $\phi$-sinigular roots for $(G,T)$,
   i.e. those roots $\alpha$ such that
   $\tau(\alpha)=-\alpha$ and $(\alpha,\lambda)=0$.
   Consider the associated morphism 
   $\mathrm{SL}_2(\bC)\rightarrow G^{alg}$
   that lifts the morphism $\SL_2\rightarrow G$
   associated to $\check{\alpha}$.
   The image of $\check\alpha(-I)$ lands inside $T^{alg,\tau}$
   and maps to an element $\overline{m}_\alpha\in\pi_0(T^{alg,\tau})$.
   Then $\ker(p_\phi)$ is generated by these $\overline{m}_\alpha\in\pi_0(T^{alg,\tau})$.

   On the other hand, 
   $L_\lambda=Z_G(\phi(\bC^\times))$ is a Levi subgroup, 
   on which $\tau$ acts.
   In \cite[Lemma 12.10]{ABV},
   let their $(G,\theta,K,T)$ be our 
   $(L_\lambda,\tau, L_\lambda^\tau=Z_G(\phi),T)$.
   Since $\phi$ is chosen to be of standard form,
   the assumption (a) of the lemma is satisfied. 
   Note $T\cap K=T^\tau$.
   Thus by (d) of the lemma, the kernel of
   $p_\phi^0:\pi_0(T^\tau)=\pi_0(T\cap K)\rightarrow\pi_0(Z_G(\phi))=\pi_0(K)$
   is generated by the images $m_\alpha'\in\pi_0(T^\tau)$
   of $\check{\alpha}(-1)\in T^\tau$
   for $\tau$-real roots $\alpha$ in $L_\lambda$,
   i.e. $\tau$-real $\phi$-singular roots of $G$.
   Clearly $q_\tau(\overline{m}_\alpha)=m_\alpha'$.
   This proves \eqref{eq:ker comparison}.
\end{proof}

Combining with the bijection 
in Theorem \ref{T=X enhanced}, we obtain:

\begin{thm}\label{K types}
    (1) The assignment $(\gamma,\tilde\chi)\to\on{LKT}(\pi(\phi,\tilde\chi))$
    induces a bijection
    \[\on{Loc}_{G^{alg}(\cO)}(X^\theta(F))\cong\bigsqcup_{i\in I}\ \on{Irr}(\check K_i).\]

       (2) The assignment $(\gamma,\chi)\to\on{LKT}(\pi(\phi,\chi))$
    induces a bijection
    \[\on{Loc}_{G^{}(\cO)}(X^\theta(F))\cong\bigsqcup_{i\in I_{2\check\rho(-1)}}\on{Irr}(\check K_i).\]
\end{thm}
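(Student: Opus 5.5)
The plan is to compose bijections already established. For part (1), I will start from Theorem \ref{T=X enhanced}(2), which gives a natural bijection
\[\on{Loc}_{G^{alg}(\cO)}(X^\theta(F))\cong \on{Loc}_{G^{alg}}(\Temp(G,\theta_0)).\]
Explicitly, the $G^{alg}(\cO)$-orbit of a pair $(\gamma_{\lambda,y},\tilde\chi)$ is sent to the $G^{alg}$-orbit of $(\phi_{\lambda,y},\tilde\chi)$. Here $\tilde\chi$ is transported along the isomorphism $\pi_0(Z_{G^{alg}}(\phi_{\lambda,y}))\cong\pi_0(Z_{G^{alg}(\cO)}(\gamma_{\lambda,y}))$ from its proof. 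I will then compose with Vogan's bijection between $(i)$ and $(iii)$, namely $(\phi,\tilde\chi)\mapsto\on{LKT}(\pi(\phi,\tilde\chi))$, onto $\bigsqcup_{i\in I}\on{Irr}(\check K_i)$. Since both maps are bijections, so is the composite, and by construction it is the stated assignment $(\gamma,\tilde\chi)\mapsto \on{LKT}(\pi(\phi,\tilde\chi))$ with $\phi$ corresponding to $\gamma$ under Theorem \ref{T=X}.

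For part (2), I will use Theorem \ref{T=X enhanced}(1) to identify $\on{Loc}_{G(\cO)}(X^\theta(F))$ with $\on{Loc}_G(\Temp(G,\theta_0))$. I will then apply Lemma \ref{pure}, which identifies the latter with $\bigsqcup_{i\in I_{2\check\rho(-1)}}\on{Irr}(\check K_i)$ via $(\phi,\chi)\mapsto\on{LKT}(\pi(\phi,\chi))$.

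It remains to check compatibility with part (1). The embedding $\on{Loc}_{G(\cO)}(X^\theta(F))\hookrightarrow \on{Loc}_{G^{alg}(\cO)}(X^\theta(F))$, given by inflating characters along $\pi_0(Z_{G^{alg}(\cO)}(\gamma))\to\pi_0(Z_{G(\cO)}(\gamma))$, should correspond under Theorem \ref{T=X enhanced} to the embedding \eqref{inclusion}. The reason is that the isomorphisms of component groups in Theorem \ref{T=X} and Theorem \ref{T=X enhanced} are both induced by the inclusion $Z_{L_\lambda}(y)\subset Z_{G(\cO)}(\gamma_{\lambda,y})$ and its $G^{alg}$-lift, and these commute with the covering maps. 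Hence part (2) is the restriction of the bijection in part (1), and it is well-defined with the same formula.

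The main obstacle is this compatibility check, specifically that the square of component groups commutes. I would verify it directly from the identification $Z_{G^{alg}}(\phi_{\lambda,y})\cong Z_{L^{alg}_\lambda}(y)$ in the proof of Theorem \ref{T=X enhanced}, which maps onto $Z_{L_\lambda}(y)=Z_G(\phi_{\lambda,y})$. The substantive input, that the pure locus corresponds exactly to characters factoring through $\pi_0(Z_G(\phi))$, is already supplied by Lemma \ref{pure}.
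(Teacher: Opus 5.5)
Your proposal is correct and matches the paper's argument. Part (1) composes Theorem \ref{T=X enhanced}(2) with Vogan's bijection $(i)\leftrightarrow(iii)$, and part (2) composes Theorem \ref{T=X enhanced}(1) with Lemma \ref{pure}. The compatibility check you single out as the main obstacle is not required by the statement. It is also immediate: both component-group isomorphisms are induced by including constant loops, and that inclusion commutes with the coverings $G^{alg}\to G$ and $G^{alg}(\cO)\to G(\cO)$.
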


\section{Generalizations}\label{generalizations}
We state a generalization to 
extended loop symmetric spaces, generalized 
$G$-bundles, 
extended Kottwitz sets, tempiric $E$-parameters, and 
projective $(\check\fg,\check K_i)$-modules.

Let $Z(G)^{\theta_0}_{tor}$
be the torsion elements in the $\theta_0$-fixed points subgroup of $Z(G)$.
We fix an element $z\in Z(G)^{\theta_0}_{tor}$.
Following \cite[Section 5.4.2]{CY},
consider the extended loop symmetric space 
\[X_z^\theta(F)=\{\gamma\in G(F)|\gamma\theta(\gamma)=z^{-1}\},\]
the stack of generalized $G$-bundles on twistor $\bbP^1$
\[\Bun_G(\bbP^1)_z=\{(\cE,c)|\cE\in\Bun_G(\bbP^1), c:\cE\cong\eta(\cE), c\eta(c)=z^{-1}\on{id}_\cE:\cE\to\eta(\cE)\to \eta^2(\cE)=\cE\}\]
and the extended Kottwitz set 
\[B(G,\eta_0)_z=G\backslash Z^1_{alg}(\rW_\bR,G,z)\]
where $Z^1_{alg}(\rW_\bR,G,z)$
is the set consisting of pairs $(\lambda,y)$ where 
\begin{itemize}
	\item [(i)] $\lambda:\bGm\rightarrow G$ is a cocharacter over $\bC$,
	\item [(ii)] $y\in G$ such that $y\eta_0(y)=\lambda(-1)z^{-1}$,
	\item [(iii)] $y\eta_0(\lambda(t)) y^{-1}=\lambda(\bar t)$, for $t\in\bC^\times$,
	\item [(iv)] $g\cdot (\lambda,y)=(g\lambda g^{-1},gy\eta_0(g)^{-1})$ for $g\in G$.
    \end{itemize}
Let $\Bun_G(\bbP^1)_{z}(\bbR)$ be the set of isomorphism classes of generalized $G$-bundles on twistor $\bbP^1$
and let $\on{Loc}(\Bun_G(\bbP^1)_{z}(\bbR))$ be the set of 
isomorphism classes of local systems on it.

We have the following generalization of Matsuki duality
in \cite[Section 5.4]{CY}:

\begin{lem}\label{extended}
(1) There are natural bijections
     \[G(\cO)\backslash X^{\theta }_z(F)\cong \Bun_G(\bbP^1)_{z}(\bbR)\cong B(G,\eta_0)_z. \]
(2) There is a natural bijection
\[\on{Loc}_{G^{}(\cO)}( X^{\theta }_z(F))\cong 
    \on{Loc}(\Bun_G(\bbP^1)_{z}(\bbR)).\]
\end{lem}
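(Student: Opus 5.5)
The plan is to run the proofs of Lemma \ref{review of Matsuki} and of the bijection $B(G,\eta_0)\cong\Bun_G(\bbP^1)_\eta$ again, with the constant $1$ replaced by $z^{-1}$ throughout. Since $z$ is central, $\theta_0$-fixed and torsion, twisting by $z$ commutes with every operation used in the untwisted case. This is the content of \cite[Section 5.4]{CY}, and the proof will follow that section.

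For the first bijection $G(\cO)\backslash X^\theta_z(F)\cong\Bun_G(\bbP^1)_z(\bbR)$, I would glue along the formal disc, as in the untwisted Matsuki duality. Given $\gamma\in X^\theta_z(F)$, take the trivial bundle on a formal neighbourhood of $0$ and its image under the antiholomorphic map $c$ near $\infty$. Using the Beauville--Laszlo description of $\Bun_G(\bbP^1)$ as a double coset $G[t^{-1}]\backslash G(F)/G(\cO)$, $\gamma$ glues these to a bundle $\cE_\gamma$ together with an isomorphism $c_\gamma:\cE_\gamma\cong\eta(\cE_\gamma)$. The equation $\gamma\theta(\gamma)=z^{-1}$ translates into the twisted cocycle condition $c\eta(c)=z^{-1}\on{id}$.

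\begin{itemize}
\item The action of $g\in G(\cO)$ by $\theta$-conjugation corresponds to changing the trivialization on the disc. So orbits map to isomorphism classes, and stabilizers map to automorphism groups.
\item Surjectivity comes from the Birkhoff decomposition $G(F)=G[t^{-1}]\,T(F)\,G(\cO)$, refined exactly as in \cite[Theorem 4]{CY}. This step does not depend on the value of $\gamma\theta(\gamma)$, because $z$ is central and fixed by $\theta$.
\end{itemize}

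For the second bijection $\Bun_G(\bbP^1)_z(\bbR)\cong B(G,\eta_0)_z$, I would adapt \cite[Proposition 5.7]{CY}. By Grothendieck's theorem, $\cE\cong\cE_\lambda$ for a dominant cocharacter $\lambda$. A real structure on $\cE_\lambda$ is then determined by an element $y\in G$ satisfying condition (iii). The twisted relation $c\eta(c)=z^{-1}$ becomes $y\eta_0(y)=\lambda(-1)z^{-1}$, where the factor $\lambda(-1)$ arises from the transition function $t^\lambda$ composed with $c(t)=-\bar t^{-1}$. Isomorphisms of pairs $(\cE,c)$ match the $G$-action (iv), once one uses that $\Aut(\cE_\lambda)$ retracts onto $L_\lambda$.

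Part (2) follows formally from part (1), provided the isomorphisms of stabilizers are compatible: $\pi_0(Z_{G(\cO)}(\gamma))\cong\pi_0(\Aut(\cE_\gamma,c_\gamma))$. I would prove this as in Lemma \ref{Y=X} and \cite[Theorem 22]{CY}. Take a normal form $\gamma=t^\lambda y$ with $y\in L_\lambda w_0^{-1}$, where now $y\theta_0(y)=\lambda(-1)z^{-1}$. Then:

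\begin{itemize}
\item evaluation at $t=0$ maps $Z_{G(\cO)}(\gamma)$ onto $Z_{L_\lambda}(y)$ with contractible (pro-unipotent) fibres;
\item $\Aut(\cE_\gamma,c_\gamma)$ likewise retracts onto a real form of the same group.
\end{itemize}

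The step I expect to be the main obstacle is the existence of the normal form $t^\lambda y$ in the twisted setting, that is, the analogue of \cite[Theorem 13]{CY} for $\lambda(-1)z^{-1}$. My first attempt would be the reduction used in Lemma \ref{Y=X}. Conjugating $\gamma$ into $t^\lambda L_\lambda w_0^{-1}$ uses only the Cartan decomposition and the pinned relation $\theta_0(B^-)=\Ad_{w_0}(B)$, and neither of these involves the right-hand side of the defining equation. The twist $z^{-1}$ then appears only in the equation for $y$.
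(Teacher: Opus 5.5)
Your plan of deferring to \cite[Section 5.4]{CY} matches the paper, which gives no argument of its own for this lemma. However, the mechanism you sketch for the Matsuki bijection would fail.

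\textbf{The gluing step.} The element $\gamma$ does not glue to a real twistor bundle. The involution $\theta$ is holomorphic and fixes $0$ and $\infty$, while $\eta$ is antiholomorphic and swaps them, so $\gamma\theta(\gamma)=z^{-1}$ does not turn into $c\eta(c)=z^{-1}$. The glued bundle is also not constant on orbits: in $g\gamma\theta(g)^{-1}$, the left factor $g\in G(\cO)$ is not a change of trivialization off the disc. Concretely, take $G=\SL_2$, $\theta_0=1$, $\gamma=t^{\check\alpha}w_0$ and $g=\begin{pmatrix}1&t\\0&1\end{pmatrix}$. The class of $g\gamma\theta(g)^{-1}$ in $G[t^{-1}]\backslash G(F)/G(\cO)$ is that of $gt^{\check\alpha}=\begin{pmatrix}t&1\\0&t^{-1}\end{pmatrix}$. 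This class is trivial, since left multiplication by $\begin{pmatrix}-t^{-1}&1\\-1&0\end{pmatrix}\in\SL_2(\bC[t^{-1}])$ lands in $\SL_2(\cO)$. By contrast, $\gamma$ itself lies in the non-trivial Birkhoff stratum of $t^{\check\alpha}$.

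\textbf{The missing idea.} What is needed is the argument recalled after Theorem~\ref{BG=T}. First, parametrize both sides by pairs $(\lambda,y)$: the loop side by $y\theta_0(y)=\lambda(-1)z^{-1}$, and the Kottwitz side by the $\eta_0$-equations (ii)--(iii). Then use the twisted form of \cite[Theorem 17]{CY}: every $\eta_0$-cocycle has a representative with $y\in G_c$. You also need the Matsuki-type statement that on both sides compact representatives are unique up to $L_\lambda\cap G_c$. On $G_c$ one has $\eta_0=\theta_0$, so the two sets of equations coincide there. The twist is harmless not because it commutes with a gluing, but because $z$ is torsion: hence $z\in T_c$ and $\eta_0(z)=\theta_0(z)=z$. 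This compact-representative step, not the loop-side normal form $t^\lambda y$ (which the paper treats as routine), is the real obstacle.

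\textbf{Part (2).} The same idea is needed here. $\Aut(\cE,c)$ retracts onto $L_\lambda^{\sigma}$ with $\sigma=\Ad_y\circ\eta_0$. This is a real form of $L_\lambda$, not of $Z_{L_\lambda}(y)=L_\lambda^{\Ad_y\circ\theta_0}$, so its component group must be related to $\pi_0(Z_{L_\lambda}(y))$ through a Cartan involution. For compact $y$, both $y\theta_0(y)$ and $y\eta_0(y)$ equal the element $\lambda(-1)z^{-1}$, which is central in $L_\lambda$. Hence $\Ad_y\circ\theta_0$ commutes with $\sigma$, and their product is $\eta_{c,0}|_{L_\lambda}$. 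So $\Ad_y\circ\theta_0$ is a Cartan involution of $L_\lambda^\sigma$, and the maximal compact subgroup $Z_{L_\lambda}(y)\cap G_c$ is a compact form of $Z_{L_\lambda}(y)$. This gives $\pi_0(\Aut(\cE,c))\cong\pi_0(Z_{L_\lambda}(y))$.
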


To define tempiric $E$-parameters, we first 
recall the notion of $E$-group $G^\Gamma_z$
with second invariant $z$
in \cite{ABV}.
As a set, we have 
$ G^\Gamma_z=G\sqcup G\delta$,
and the multiplication is given by 
\[\delta g\delta ^{-1}=\theta_0(g),\ \  \delta^2=z.\]
Recall $\Gamma=\on{Gal}(\bC/\bR)$. 
We have a short exact sequence 
\[1\to G\to G^\Gamma_z\to \Gamma\to 1.\]
When $z=1$, then $G_1^\Gamma\cong G^\Gamma= G\rtimes_{\theta_0}\Gamma$ is the $L$-group. In general, the $E$-group $G^\Gamma_z$
is not isomorphism to a semi-direct product of $G$ and $\Gamma$. A tempiric $E$-parameter is a 
continuous map $\phi:\rW_\bR\to G^\Gamma_z$ over $\Gamma$
whose restriction to the factor $\bR_{>0}$ is trivial.
We denote by $\Temp(G,\theta_0)_z$ the set of 
tempiric $E$-parameters and 
$G\backslash\Temp(G,\theta_0)_z$ the set of $G$-orbits under the natural conjugation action.

Consider the pro-algebric group 
$\check G^{alg}\to\check G$ of projective limit of finite coverings of $\check G$.
For each $\check K_i, i\in I$, we 
write $\check K^{alg}=\check K_i\times_{\check G}\check G^{alg}$
the base change.
According to \cite[Section 10]{ABV},
there is an isomorphism
\[\on{Hom}_{cont}(\pi_1(\check G)^{alg},\bC^\times)\cong Z(G)_{tor}.\]
Hence the element $z\in Z(G)_{tor}^{\theta_0}$ gives rise to a continuous character 
$\chi_z:\pi_1(\check G)^{alg}\to\bC^\times$
of $\check G^{alg}$.
Let $\on{Irr}(\check K_{i}^{alg})$ be the set of  irreducible finite dimensional 
representations of $\check K^{alg}$ and  
let  
\[\on{Irr}(\check K^{alg}_{i})_z\subset \on{Irr}(\check K_{i}^{alg})\] be the subset of 
those irreducible representations such that  
central subgroup $\pi_1(\check G)^{alg}\subset\check K^{alg}_{i}$
acts by the character $\chi_z$.

\begin{lem}\label{proj}
(1) There is a natural bijection
\[\Loc_{G^{alg}}(\Temp(G,\theta_0)_z)\cong\bigsqcup_{i\in I}\on{Irr}(\check K^{alg}_i)_z.\]
(2) The bijection above restricts to a bijection
\[\Loc_{G^{}}(\Temp(G,\theta_0)_z)\cong\bigsqcup_{i\in I_{2\check\rho(-1)}}\on{Irr}(\check K^{alg}_i)_z.\]
    \end{lem}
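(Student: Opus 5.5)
Part (1) is the $E$-group version of Vogan's minimal $K$-type bijection, and we would obtain it from the $z=1$ case stated above. The route is the ABV generalization of the local Langlands correspondence to $E$-groups combined with Vogan's theory.

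We would organize the proof of (1) as a chain of three bijections.

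\begin{enumerate}
\item By \cite[Section 10]{ABV}, $G^{alg}$-enhanced tempiric $E$-parameters $(\phi,\tilde\chi)$ with $\phi:\rW_\bR\to G^\Gamma_z$ correspond to irreducible $(\check\fg,\check K_i^{alg})$-modules, $i\in I$. These are the ``projective'' modules, on which $\pi_1(\check G)^{alg}$ acts through the character $\chi_z$ determined by the isomorphism $\on{Hom}_{cont}(\pi_1(\check G)^{alg},\bC^\times)\cong Z(G)_{tor}$.
\item The tempiric condition (trivial on $\bR_{>0}$) matches tempered modules with real infinitesimal character, exactly as in the $z=1$ case.
\item Vogan's minimal $K$-type theory \cite{V} applies verbatim to $(\check\fg,\check K_i^{alg})$-modules. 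It shows that $\pi\mapsto\on{LKT}(\pi)$ is a bijection from tempered modules with real infinitesimal character onto $\on{Irr}(\check K_i^{alg})$. This map preserves the action of the central subgroup $\pi_1(\check G)^{alg}$, hence restricts to a bijection onto $\on{Irr}(\check K_i^{alg})_z$.
\end{enumerate}

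Concretely, we would repeat the $z=1$ discussion from the previous section. First conjugate $\phi$ into standard form as in \cite[Lemma 3.2, Definition 4.1]{AA}, which gives an involution $\tau$ on $T$. Then use the $z$-twisted version of \cite[Proposition 4.12]{AA}, namely the bijection $\on{Irr}(\pi_0(T^{alg,\tau}))\simeq\mathcal X_{\check\tau}$. Under it, the characters of $\pi_0(T^{alg,\tau})$ restricting to the prescribed character on $\pi_1(G)^{alg}$ land in the appropriate fiber $\mathcal X(\cdot)$. This produces the strong involution $i$ and the embedding $\mathcal E_\phi$ exactly as before.

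For (2) we would copy the proof of Lemma \ref{pure}. The argument there only uses the commutative square of surjections
\[\pi_0(T^{alg,\tau})\to\pi_0(Z_{G^{alg}}(\phi)),\qquad \pi_0(T^\tau)\to\pi_0(Z_G(\phi)),\]
together with the kernel identity \eqref{eq:ker comparison}. Both are statements about the centralizer of $\phi$ in $G$ and in $G^{alg}$, and neither involves $\delta^2$. Indeed, $Z_G(\phi)$ is computed inside $L_\lambda$ with the involution $\tau=\Ad_{\phi(j)}$, and changing $\delta^2=1$ to $\delta^2=z$ central does not change $\tau$. So \cite[Lemma 12.10]{ABV} applies again, and the generators $\overline m_\alpha\mapsto m'_\alpha$ for $\tau$-real, $\phi$-singular roots $\alpha$ again give $q_\tau(\ker p_\phi)=\ker p^0_\phi$.

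We would then show that $\tilde\chi$ factors through $\pi_0(Z_G(\phi))$ if and only if $\mathcal E_\phi(\tilde\chi)$ has central invariant $2\check\rho(-1)$. Via \cite[Proposition 4.17]{AA}, this says that the associated strong involution is pure.

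The main obstacle is the bookkeeping in step (1): verifying that the dictionary of \cite{AA} between $\on{Irr}(\pi_0(T^{alg,\tau}))$ and the KGB space is compatible with the twist by $z$. Specifically, one must check that the action of $\pi_1(\check G)^{alg}$ by $\chi_z$ on the $\check K_i^{alg}$ side matches the condition $\delta^2=z$ on the $E$-group side. We would settle this by a direct check on the torus, following the construction in \cite[\S4.3]{AA}, using the duality between $Z(G)_{tor}$ and characters of $\pi_1(\check G)^{alg}$ from \cite[Section 10]{ABV}. We expect that only the central invariant shifts while the real-root combinatorics are unchanged.
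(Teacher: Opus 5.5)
Your route is the paper's. For (1) you use \cite[Theorem 10.4]{ABV} for the $z$-twisted correspondence and then Vogan's minimal $K$-types. For (2) you rerun the proof of Lemma \ref{pure}, correctly noting that $\tau=\Ad_y\circ\theta_0$ is still an involution of $L_\lambda$ when $\delta^2=z$ is central, so \cite[Lemma 12.10]{ABV} and \eqref{eq:ker comparison} go through unchanged. Two points need correcting.

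First, ``applies verbatim'' in your step 3 skips the one thing the paper actually argues in (1). Vogan's theory is about Harish-Chandra modules for a genuine finite-type group, not for the pro-algebraic $\check K_i^{alg}$. The missing observation is that $z$ is torsion, so $\chi_z$ factors through a finite quotient $\pi_1(\check G)^{alg}\to A_z$. Every module in question therefore factors through a finite central cover $\check K_{i,z}\to\check K_i$. Vogan's bijection for $(\check\fg,\check K_{i,z})$-modules preserves the central character, so it restricts to the $\chi_z$-part and gives (1).

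Second, the ``main obstacle'' you isolate is not an obstacle for (1). \cite[Theorem 10.4]{ABV} already produces modules on which $\pi_1(\check G)^{alg}$ acts by $\chi_z$, so no torus-level check of the dictionary in \cite[\S4.3]{AA} is needed to get the bijection. The $E$-group versions of \cite[Lemma 4.10, Propositions 4.12 and 4.17]{AA} are needed only in (2), to know which $i$ occurs. There the paper, like you, takes them on the strength of \cite{ABV} being formulated for $E$-groups.

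Be careful with your expectation that ``only the central invariant shifts.'' If you mean the central invariant $c\check\theta(c)$ of the strong involution, that would contradict (2). What (2) needs is that this invariant is determined by the restriction of $\tilde\chi$ to $\pi_1(G)^{alg}$ alone, independently of $z$. (The element $z$ lies in $Z(G)$ and only changes the type $\chi_z$ on the $\check K_i^{alg}$ side.) That independence is what keeps $G$-enhanced parameters landing in $I_{2\check\rho(-1)}$.
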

\begin{proof}
 It follows 
from
\cite[Theorem 10.4]{ABV} that  there is a bijection between the equivalence classes of tempiric $E$-parameters 
$\Loc_{G^{alg}}(\Temp(G,\theta_0)_z)$ and the isomorphism classes of irreducible tempered $(\check\fg,\check K_i^{alg})$-modules with real infinitesimal characters, whose  action of $\pi_1(\check G)^{alg}$ factors through $\chi_z$.
Note that the character $\chi_z$ factors through 
a finite quotient $\pi_1(\check G)^{alg}\to A_z$, thus 
the action of $\check K_i^{alg}$ also factors through a finite central covering $\check K_{i,z}\to\check K_i$.
Thus we are in the ordinary Harish–Chandra module setting (no pro-algebraic issue)
and we can apply Vogan's theory \cite{V} to conclude the bijection in (1).

Since \cite{ABV} works entirely in the setting of $E$-groups, the argument of Lemma \ref{pure} applies without change. This proves part (2).
\end{proof}

We now can state:

\begin{thm}\label{Main:gen}

     (1) There are natural bijections
    \[G(\cO)\backslash X^{\theta }_z(F)\cong \Bun_G(\bbP^1)_{z}(\bbR)\cong B(G,\eta_0)_z \cong G\backslash\Temp(G,\theta_0)_z.\]

  (2) There are natural bijections
\[\on{Loc}_{G^{}(\cO)}( X^{\theta }_z(F))\cong
\on{Loc}(\Bun_G(\bbP^1)_{z}(\bbR))\cong
\Loc_{G^{}}(\Temp(G,\theta_0)_z)\cong\bigsqcup_{i\in I_{2\check\rho(-1)}}\on{Irr}(\check K^{alg}_i)_z.\]
    
(3) There are natural bijections
\[\on{Loc}_{G^{alg}(\cO)}(X^{\theta }_z(F))\cong\Loc_{G^{alg}}(\Temp(G,\theta_0)_z)\cong\bigsqcup_{i\in I}\on{Irr}(\check K^{alg}_i)_z.\]

\end{thm}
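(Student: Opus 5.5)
The plan is to reduce Theorem \ref{Main:gen} to the $z=1$ case treated above, using Lemma \ref{extended} and Lemma \ref{proj} as the two external inputs. What remains is to compare $G\backslash\Temp(G,\theta_0)_z$ with $B(G,\eta_0)_z$ and with $G(\cO)\backslash X^\theta_z(F)$, and to compare the component groups of the stabilizers.

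First, parametrize tempiric $E$-parameters. Such a $\phi$ is determined by $\lambda=\phi|_{S^1}$ and by $\phi(j)=y\delta$. Since $j^2=-1$ and $\delta^2=z$, the relation $\phi(j)^2=\phi(-1)$ becomes
\[y\theta_0(y)z=\lambda(-1), \qquad\text{that is,}\qquad y\theta_0(y)=\lambda(-1)z^{-1},\]
together with the relation $y\theta_0(\lambda(u))y^{-1}=\lambda(u^{-1})$. The $G$-action is $g\cdot(\lambda,y)=(g\lambda g^{-1},gy\theta_0(g)^{-1})$.

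Next, I would define
\[Y_{\lambda,z}=\{y\in L_\lambda w_0^{-1}\mid y\theta_0(y)=\lambda(-1)z^{-1}\}\]
for dominant $\lambda$ with $\lambda=-w_0^{-1}\theta_0(\lambda)$. I would then repeat Lemma \ref{Y=T} verbatim. Conjugating $\lambda$ to be dominant and comparing antidominant cocharacters forces $y\in L_\lambda w_0^{-1}$. This gives
\[\bigsqcup_\lambda L_\lambda\backslash Y_{\lambda,z}\cong G\backslash\Temp(G,\theta_0)_z, \qquad Z_{L_\lambda}(y)=Z_G(\phi_{\lambda,y}).\]
On the loop side I would invoke the $z$-version of \cite[Theorem 13]{CY} from \cite[Section 5.4]{CY}, which underlies Lemma \ref{extended}. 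It should give $\bigsqcup_\lambda L_\lambda\backslash Y_{\lambda,z}\cong G(\cO)\backslash X^\theta_z(F)$ via $y\mapsto t^\lambda y$. Here the signs need care: $t^\lambda y\,\theta(t^\lambda y)=\lambda(-1)\,y\theta_0(y)$ up to the $L_\lambda$-twisting, and this should equal $z^{-1}$. The same result should also give that $Z_{G(\cO)}(t^\lambda y)\to Z_{L_\lambda}(y)$ is surjective with contractible fibers. Combining these with Lemma \ref{extended}(1) proves part (1). Alternatively, I would use the compact-representative argument after Theorem \ref{BG=T}, replacing $\eta_0$ by $\theta_0$ on $G_c$, to match $B(G,\eta_0)_z$ with $G\backslash\Temp(G,\theta_0)_z$ directly.

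For parts (2) and (3), the component-group isomorphisms above give
\[\on{Loc}_{G(\cO)}(X^\theta_z(F))\cong\Loc_G(\Temp(G,\theta_0)_z).\]
The $G^{alg}$ version follows exactly as in Theorem \ref{T=X enhanced}(2). There I would take preimages $L_\lambda^{alg}$ and use the evaluation map $G^{alg}(\cO)\to G^{alg}$, whose restriction to stabilizers again has contractible fibers. Composing with Lemma \ref{extended}(2) gives the twistor term. Lemma \ref{proj} then supplies the final bijections with $\bigsqcup\on{Irr}(\check K_i^{alg})_z$, over $I_{2\check\rho(-1)}$ in part (2) and over $I$ in part (3).

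The main obstacle is matching the central-element convention. The $z$ in $X^\theta_z$, given by $\gamma\theta(\gamma)=z^{-1}$, and the $z$ in $G^\Gamma_z$, given by $\delta^2=z$, must correspond, and not $z$ with $z^{-1}$ or $z\lambda(-1)$ versus $\lambda(-1)$. The contractible-fiber statement of \cite{CY} must also hold in the extended setting. I would first check the sign computation on the torus $G=T$, then verify the stabilizer statement via the same Iwahori/Levi decomposition argument as in \cite[Theorem 13]{CY}.
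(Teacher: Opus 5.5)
Your proposal is correct and follows essentially the same route as the paper. The paper also introduces $Y_{\lambda,0,z}=\{y\in L_\lambda w_0^{-1}\mid y\theta_0(y)=\lambda(-1)z^{-1}\}$, reruns the arguments of Section \ref{loop} (Lemma \ref{Y=X}, Lemma \ref{Y=T}, Theorem \ref{T=X enhanced}) with $X^\theta_z(F)$ and $\Temp(G,\theta_0)_z$, and then combines the result with Lemma \ref{extended} and Lemma \ref{proj}; your sign check agrees with the paper's conventions, since $\phi(j)^2=y\theta_0(y)z$ and in fact $t^\lambda y\,\theta(t^\lambda y)=\lambda(-1)y\theta_0(y)=z^{-1}$ holds exactly, with no further twisting needed.
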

\begin{proof}
For any $\lambda\in X_*(T)^+$ satisfying $\lambda=-w_0^{-1}\theta_0(\lambda)$, we let
    \[Y_{\lambda,0,z}=\{y\in L_\lambda w_0^{-1}|y\theta_0(y)=\lambda(-1)z^{-1}\}.\]
    By the same arguments, all the results of Section \ref{loop} remain valid upon replacing $Y_{\lambda,0}$, $X^\theta(F)$, and $\Temp(G,\theta_0)$ by $Y_{\lambda,0,z}$, $X^\theta_z(F)$, and $\Temp(G,\theta_0)_z$, respectively. We thus obtain bijections
    \[G(\cO)\backslash X^{\theta }_z(F)\cong G\backslash\Temp(G,\theta_0)_z, \]
    \[\on{Loc}_{G(\cO)}(X^{\theta }_z(F))\cong \Loc_{G^{}}(\Temp(G,\theta_0)_z),\]
   \[\on{Loc}_{G^{alg}(\cO)}(X^{\theta }_z(F))\cong \Loc_{G^{alg}}(\Temp(G,\theta_0)_z).\]
Combining with Lemma \ref{extended}
and Lemma \ref{proj}, we obtain the desired bijections.

\end{proof}

\section{Conjectures}\label{conj}
We retain the notation in Section \ref{generalizations}.
Let $\on{Rep}(\check K^{alg}_i)_z$
be the category of finite dimensional complex representations of $\check K^{alg}$  
such that  
central subgroup $\pi_1(\check G)^{alg}\subset\check K_{i}^{alg}$
acts by the character $\chi_z$.
Note that $\on{Rep}(\check K^{alg}_i)_z$ is natually a module category over $\on{Rep}(\check G)$
where the action is through the pull-back functor $\on{Rep}(\check G)\to\on{Rep}(\check K^{alg}_i)$
along the map $\check K^{alg}_i\to\check G$.

It follows from the placidness results \cite{CY2} that 
there is a well-defined abelian category 
$\on{Perv}(G^{alg}(\cO)\backslash X^\theta_z(F))$ of $G^{alg}(\cO)$-equivariant perverse sheaves on $X^\theta_z(F)$. Moreover, the irreducible objects are indexed by 
$\on{Loc}_{G^{alg}(\cO)}(X^\theta_z(F))$.

\begin{conj}\label{Conjectures}

(1) There is an equivalence of abelian  categories
\[\mathrm{Perv}(G^{alg}(\cO)\backslash X_z
^\theta(F))\cong\bigoplus_{i\in I}\on{Rep}(\check K^{alg}_i)_z\]
such that the induced map between irreducible object is given by the bijection in Theorem \ref{Main:gen}.
Moreover,
the equivalence intertwines the 
Hecke action of 
$\on{Perv}(G(\cO)\backslash G(F)/G(\cO))$ on $\mathrm{Perv}(G^{alg}(\cO)\backslash X_z
^\theta(F))$ with the action of $\on{Rep}(\check G)$
on $\on{Rep}(\check K^{alg})_z$
under the geometric Satake equivalence.

(2) The equivalence in (1)  restricts to an equivalence of abelain 
$\on{Rep}(\check G)$-module categories
\[\mathrm{Perv}(G^{}(\cO)\backslash X_z
^\theta(F))\cong\bigoplus_{i\in I_{2\check\rho(-1)}}\on{Rep}(\check K^{alg}_i)_z.\]
\end{conj}

\begin{rem}
    Conjecture \ref{Conjectures} provides an abelian refinement of 
    the Twisted Relative Langlands Duality 
    in \cite[Conjecture 1.5]{C}. 

%(2) With some modifications, 
%we also expect a version of Conjecture \ref{Conjectures} in the modular coefficient settings.
\end{rem}

\section{$\SL_2$ Example}
For $G=\SL_2$, we compute
$\on{Loc}_{G(\cO)}(X^\theta(F))$,
$\on{Loc}_G(\Temp(G,\theta_0))$,
and $\bigsqcup_{i\in I}\on{Irr}(\check K_i)$
explicitly
and verify Theorem \ref{T=X enhanced} and Theorem \ref{K types}.
Note that since $\SL_2$ is simply connected
and $\LG=\PGL_2$ is adjoint,
$G=G^{alg}$
and $I=I_{2\rho(-1)}$.
The only pinned involution on $\SL_2$ is $\theta_0=1$.
Thus we have $\theta:\SL_2(F)\to\SL_2(F), \theta(\gamma)(t)=\gamma(-t)$
and $X^\theta(F)\cong \SL_2(F)/\SL_2(F')$
where $F'=\bC((t^2))$, see \cite[Proposition 25]{CY}.

Choose $T\subset B\subset\SL_2$ to be the diagonal and upper triangular matrices,
and let $\alpha\in X^*(T)$, $\check\alpha\in X_*(T)$
be the unique root and coroot.
Then $X_*(T)^+=\bZ_{+}\check\alpha$.
Let $\check T\subset\PGL_2$ be the diagonal matrices.

\subsection{$\Loc_{G(\cO)}(X^\theta(F))$ and $\Loc_G(\Temp(G,\theta_0))$}
By Lemma \ref{Y=X} and Lemma \ref{Y=T},
it suffices to compute
$\bigsqcup_{\lambda=-w_0^{-1}\theta_0(\lambda)\in X_*(T)^+} L_\lambda\backslash Y_{\lambda,0}$
and $\pi_0(Z_{L_\lambda}(y))$,
where $L_\lambda$ acts on $Y_{\lambda,0}$ by conjugation.
Here $w_0$ acts as $-1$ on $X_*(T)$,
so that every $\lambda\in X_*(T)^+$ satisfies 
$\lambda=-w_0^{-1}\theta_0(\lambda)$.
We fix $w_0=\begin{pmatrix}
    0&-1\\
    1&0
\end{pmatrix}$.

For $\lambda=0$, $L_\lambda=G$,
$Y_{0,0}=\{y\in G\mid y^2=1\}$.
We get $G\backslash Y_{0,0}=\{I,-I\}$.
The stabilizers are both $G=\SL_2$, which is connected.

For $\lambda=m\check\alpha$, $m>0$, $L_\lambda=T$.
Then
\[
Y_{\lambda,0}=\{y=\begin{pmatrix}
    0&a\\
    -a^{-1}&0
\end{pmatrix},a\in\bC^\times\mid y^2=-I=(-1)^m\}.
\]
Thus $m=2r-1$ must be odd, $r>0$. 
The action of $T$ on $Y_{\lambda,0}$ is transitive
with an orbit representative $y=w_0$.
The group of stabilizers is $\{\pm I\}$.
Denote its two characters by $\chi_+,\chi_-$,
$\chi_+(-I)=1,\chi_-(-I)=-1$.

We obtain
\begin{align*}
    \Loc_{G(\cO)}(X^\theta(F))&=\{(I,1),(-I,1),(t^{(2r-1)\check\alpha}w_0,\chi_\pm),r\geq 1\}\\
    \simeq\Loc_G(\Temp(G,\theta_0))&=
    \{(\phi_{0,I},1),(\phi_{0,-I},1),
    (\phi_{(2r-1)\check\alpha,w_0},\chi_\pm),r\geq 1\},
\end{align*}
where elements in the two sets are corresponded in the presented order.

\subsection{Inner forms}
\subsubsection{}
On $\LG=\PGL_2$, the pinned Chevalley involution 
$\on{Chev}_{\check G}$ is trivial,
so that $\check\theta=1$.
Thus $I$ is just the set of involution conjugacy classes in $\PGL_2$, represented by 
$I,\rho(-1)=\begin{pmatrix}
    1&0\\
    0&-1
\end{pmatrix}$.
The fixed subgroups by their adjoint actions are
$\check K_1=\PGL_2,\check K_2=\on{PO}_2=N_{\PGL_2}(\check T)$.

For $\check K_1=\PGL_2=\SL_2/\{\pm I\}$, 
its irreducible representations are those of $\SL_2$
on which $-I$ acts trivially, i.e. the even highest weight representations $V_{2r-2}=\Sym^{2r-2}\bC^2$, $r\geq1$.
The associated real form is the compact real form 
$\on{PU}_2$, whose maximal compact subgroup is itself.
Under Vogan norm, the minimal $K$-type $V_{2r-2}$ of 
a $\on{PU}_2$-representation has minimal $r$.

For $\check K_2=\on{PO}_2$, 
$\on{PO}_2\simeq\check T\rtimes\mu_2\simeq\bGm\rtimes\mu_2$,
where $-1\in\mu_2$ acts on $\bGm$ by inversion.
Denote the weight $m$ character of $\bGm$ by $\chi_m$, $m\in\bZ$.
Then $\Irr(\on{PO}_2)=\{W_r,r\geq 0\}$, where
\[
W_r=\begin{cases}
    \chi_0,\ -1\in\mu\text{ acts trivially},\hspace{5cm} r=0,\\
    \chi_0,\ -1\in\mu\text{ acts as }-1,\hspace{5.2cm} r=1,\\
    \chi_{r-1}\oplus\chi_{-(r-1)},\ -1\in\mu_2\text{ exchanges }\chi_{r-1},\chi_{-(r-1)},\qquad r\geq 2.
\end{cases}
\]
The associated real form is the split real form
$\PGL_2(\bR)$, whose maximal compact subgroup associated to $\on{PO}_2$ is $\on{PO}_2(\bR)$.
The Vogan norm of $W_r$ is $0$ for $r=0,1$ and a positive constant times $r-1$ for $r\geq 2$.

\subsubsection{}
We explain which inner form is associated to 
a given enhanced tempiric parameter $(\phi,\chi)$ for $\SL_2$.

If the character $\chi$ of $\pi_0(Z_G(\phi))$ is trivial,
by \cite[Proposition 4.12]{AA},
the associated point in the KGB fiber is the base point $x_{b,\tau}$, which is conjugated to the distinguished base point
$\rho(-1)$.
The fixed subgroup is $\on{PO}_2$.
Thus $(\phi_{0,I},1),(\phi_{1,I},1),(\phi_{(2r-1)\check\alpha,w_0},\chi_+),r\geq 1$
all correspond to $\on{PO}_2$,
i.e. the split real form $\PGL_2(\bR)$.

If the character $\chi$ is nontrivial, 
the enhanced parameters are $(\phi_{(2r-1)\check\alpha,w_0},\chi_-),r\geq 1$.
We determine the image of $\chi_-$ in the KGB fiber
following \cite[\S4.3]{AA}.
Since $\phi(j)=w_0$ acts on $T$ by inversion,
$\tau=1$ on $\check T$.
The KGB fiber is
\[
\mathcal{X}_\tau
=\{\xi\in N_{\PGL_2}(\check T)\mid \xi^2=1,\Ad_\xi\mid_{\check T}=id\}/_{\Ad}\check{T}
=\{I,\rho(-1)\}.
\]
Under the embedding $\Irr(\pi_0(Z_G(\phi)))\hookrightarrow\mathcal{X}_\tau$,
since the trivial character $\chi_+$ goes to the base point $\rho(-1)$,
the nontrivial character $\chi_-$ must go to the other involution $I$, which fixes $\PGL_2$.
This corresponds to the compact real form $\on{PU}_2$.

\subsection{Tempiric representations and minimal $K$-types}
\subsubsection{Trivial component character, principal series}
For $(\phi_{0,I},1)$ and $(\phi_{0,-I},1)$, 
the associated tempiric representations of $\PGL_2(\bR)$
can be constructed via principal series as follow.
Let $M=\check T[2]=\{I,\rho(-1)\}$, $\check N$ the strictly upper triangular matrices, $A=\{\on{diag}(\bR_{>0},1)\}$.
The irreducible $\PGL_2(\bR)$-representation
associated to $(\phi_{0,I},1),(\phi_{0,I},1)$ are
\[
\pi_{s,0}:=\on{Ind}^{\PGL_2(\bR)}_{MAN}\bC,\quad
\pi_{s,1}:=\on{Ind}^{\PGL_2(\bR)}_{MAN}sgn\otimes\bC
\]
respectively. Here $sgn$ is the nontrivial character of $M$.

When restricted to $\on{PO}_2(\bR)$, by Frobenius reciprocity, 
since $\on{Hom}_M(W_r\!\!\mid_M,W)\neq0$ for $W=\bC$ or $sgn$,
$W_0$(resp. $W_1$) appears in $\pi_{s,0}$(resp. $\pi_{s,1}$). 
Thus the minimal $K$-types of $\pi_{s,0},\pi_{s,1}$
are $W_0,W_1$ respectively.

\subsubsection{Trivial component character, discrete series}
For $(\phi_{(2r-1)\check\alpha,w_0},\chi_+),r\geq 1$,
the Weil representation is irreducible.
The associated representation of $\PGL_2(\bR)$ is as follows.
Let $D_{2r}^{\pm}$ be the holomorphic and anti-holomorphic 
discrete series of $\SL_2(\bR)$
of weight $2r$.
The action of $\SL_2(\bR)$ factors through $\on{PSL}_2(\bR)$.
Note $\PGL_2(\bR)/\on{PSL}_2(\bR)\simeq\mu_2$.
Then 
\[
\pi_{s,r+1}:=\Ind^{\PGL_2(\bR)}_{\on{PSL}_2(\bR)}D_{2r}^+
\]
is the associated representation.

To see its minimal $K$-type, observe
$\pi_{r+1}|_{\on{PSL}_2(\bR)}\simeq D_{2r}^+\oplus D_{2r}^-$.
Restrict it to $\on{PO}_2(\bR)^\circ\simeq\on{PSO}_2(\bR)$.
The $\on{SO}_2(\bR)$-weights of $D_{2r}^+$ and $D_{2r}^-$ 
are $2r, 2r+2, 2r+4...$ and $-2r, -2r-2, -2r-4,...$ respectively,
and $\rho(-1)\in\on{PO}_2(\bR)$ exchanges weights $2r+2a,-2r-2a$.
The minimal weight for the double cover torus is $r$.
Thus the minimal $K$-type of $\pi_{s,r+1}$ is $W_{r+1}$.

\subsubsection{Nontrivial component character, discrete series}
For $(\phi_{(2r-1)\check\alpha,w_0},\chi_-),r\geq 1$,
the Weil representation is irreducible.
The associated irreducible representation of $\on{PU}_2$ is one of $V_{2m-2}$.
Since the infinitesimal character is $2r-1$ for the enhanced parameter and is $2m-1$ for $V_{2m-2}$,
to have them matched we obtain $m=r$.
The associated representation is $V_{2r-2}$,
which is also its minimal $K$-type.

\subsection{The bijection}
We conclude that for $G=\SL_2$, 
the bijections in Theorem \ref{T=X enhanced} 
and Theorem \ref{K types} are given by:
\begin{align*}
     \Loc_{G(\cO)}(X^\theta(F))\xrightarrow{\sim}
     \Loc_{\SL_2}(\Temp(\SL_2,id))&\xrightarrow[\on{LKT}]{\sim}\Irr(\PGL_2)\sqcup\Irr(\on{PO}_2)\\
    (I,1)\mapsto\hspace{1.3cm}(\phi_{0,I},1)\hspace{1.3cm}&\mapsto\qquad W_0\\
    (-I,1)\mapsto\hspace{1.18cm}(\phi_{0,-I},1)\hspace{1.2cm}&\mapsto\qquad W_1\\
    (t^{(2r-1)\check\alpha}w_0,\chi_+)\mapsto\hspace{0.55cm}(\phi_{(2r-1)\check\alpha,w_0},\chi_+)\hspace{0.6cm}&\mapsto \qquad W_{r+1}\qquad r\geq 1\\
    (t^{(2r-1)\check\alpha}w_0,\chi_-)\mapsto\hspace{0.55cm}(\phi_{(2r-1)\check\alpha,w_0},\chi_-)\hspace{0.6cm}&\mapsto\qquad V_{2r-2}\qquad r\geq 1.
\end{align*}


\begin{bibdiv}
\begin{biblist}

\bib{AA}{article}
{
	title={Lowest $K$-types in the local Langlands correspondence}, 
	author={Adams, J},
    author={Afgoustidis, A},
    author={}
    SERIES = {},
    VOLUME = {},
 PUBLISHER = {},
      YEAR = {2024},
     PAGES = {},
      ISBN = {},
   MRCLASS = {},
  MRNUMBER = {},
MRREVIEWER = {},
       DOI = {},
       URL = {arXiv:2402.03552},
}

\bib{ABV}{article}
{
	title={The Langlands classification
and irreducible characters for real reductive groups}, 
	author={Adams, J},
    author={Barbasch, D},
    author={Vogan, D}
    SERIES = {Progress in Mathematics},
    VOLUME = {104},
 PUBLISHER = {Birkh\"auser Boston, Inc., Boston, MA},
      YEAR = {1992},
     PAGES = {xii+318},
      ISBN = {0-8176-3634-X},
   MRCLASS = {22-02 (22E47)},
  MRNUMBER = {1162533},
MRREVIEWER = {Brian\ E.\ Blank},
       DOI = {10.1007/978-1-4612-0383-4},
       URL = {https://doi.org/10.1007/978-1-4612-0383-4},
}






\bib{C}{article}
{
	title={A relative Langlands dual realization of $T^*(G/K)$ and derived Satake}, 
    Year={2026}
	author={Chen, T.-H},
    Note={Available at \url{https://arxiv.org/abs/2601.18022}}
}







\bib{CY}{article}
{
	title={Matsuki duality for loop groups}, 
	author={Chen, T.-H},
    author={Yi, L.},
    Year={2026}
    Note={Available at \url{https://arxiv.org/pdf/2604.15712}}
  
}

\bib{CY2}{article}
{
	title={Singularities of orbit closures in loop spaces of symmetric varieties II: twisted cases}, 
	author={Chen, T.-H},
    author={Yi, L.},
   Note={in preparation}
  
}


\bib{K}{article}
{
	title={$(B(G)$ for all local and global fields}, 
	author={Kottwitz, R},
    author={}
    YEAR={2014}
	Note={Available at \url{https://arXiv:1401.5728}},
}


\bib{V}{article}
{
	title={Branching to a maximal compact subgroups}, 
	author={Vogan, D},
    author={},
    author={}
    SERIES = {},
    VOLUME = {},
 PUBLISHER = {},
      YEAR = {2007},
     PAGES = {},
      ISBN = {},
   MRCLASS = {},
  MRNUMBER = {},
MRREVIEWER = {},
       DOI = {},
       URL = {},
}

\end{biblist}
\end{bibdiv}
\end{document}